\numberwithin{equation}{section}
\newtheorem{Definition}{Definition}[section]
\newtheorem{thm}{Theorem}[section]
\newtheorem{cor}[thm]{Corollary}
\newtheorem{lem}[thm]{Lemma}
\begin{document}
\title{Pazy's fixed point theorem with respect to the partial order  in  uniformly convex Banach spaces}

\author{Yisheng Song\thanks{Corresponding author. School of Mathematics and Information Science and Henan Engineering Laboratory for Big Data Statistical Analysis and Optimal Control, Henan Normal University, XinXiang HeNan, P.R. China, 453007.
Email: songyisheng@htu.cn. The work was supported by the National Natural Science Foundation of P.R. China (Grant No. 11571095),  Program for Innovative Research Team (in Science and Technology)  in University of Henan Province (14IRTSTHN023).}\quad Rudong Chen\thanks{Department of Mathematics, Tianjin Polytechnic University, Tianjin, P.R. China. Email: chenrd@tjpu.edu.cn. The work was supported by the National Natural Science
Foundation of P.R. China (Grant No.11071279) and  was supported by
The Science and Technology Plans of Tianjin (No. 15PTSYJC00230) }}

\date{}

 \maketitle

\begin{abstract}
	In this paper,  the Pazy's Fixed Point Theorems of monotone $\alpha-$nonexpansive mapping $T$ are proved in a uniformly convex Banach space $E$ with the partial order ``$\leq$". That is, we obtain that the fixed point set of $T$ with respect to the partial order ``$\leq$" is nonempty whenever the Picard iteration $\{T^nx_0\}$  is bounded for some  initial point $x_0$ with  $x_0\leq Tx_0$ or  $Tx_0\leq x_0$. When restricting the demain of $T$ to the cone $P$,  a monotone $\alpha-$nonexpansive mapping $T$ has at least a fixed point if and only if the Picard iteration $\{T^n0\}$ is bounbed.   Furthermore, with the help of the properties of the normal cone $P$, the weakly and strongly convergent theorems of the Picard iteration $\{T^nx_0\}$ are showed for finding a fixed point  of $T$ with respect to the partial order ``$\leq$"  in uniformly convex ordered Banach space. \\
	
	 {\bf Key Words and Phrases:} Ordered Banach space, fixed point, monotone $\alpha$-nonexpansive mapping, convergence.\\
	
	 {\bf 2000 AMS Subject Classification:} 47H06, 47J05, 47J25, 47H10, 47H17, 49J40,  47H04, 65J15.
\end{abstract}

\section{\bf Introduction}
\vskip 3mm

 In 1971,  Pazy \cite{P1971} proved the following fixed point theorem for a  nonexpansive mapping  in Hilbert spaces, which is referred to as {\bf Pazy's Fixed Point Theorem}.

{\bf Theorem P.} {\it Let $H$ be a Hilbert space and $C$ be a nonempty closed convex subset of $H$. Let $T:C\rightarrow C$ be a nonexpansive mapping.
Then the following are equivalent:

\quad {\rm (i)}\, $\{T^nx\}$ is a  bounded sequence in $C$ for some $x\in C$.

\quad {\rm (ii)}\, $T$  has at least a fixed point in $C$.}
\vskip 3mm

Recently, many mathematical workers studied the Pazy's Fixed Point Theorem for many distinct types of nonlinear mappings.
In 2008, Kohsaka and Takahashi \cite{KT2008} showed   Pazy's Fixed Point Theorem of a  nonspreading mapping in a Hilbert space $H$. A mapping $T:C\rightarrow C$ is called a {\em nonspreading mapping} if
$$
2\|Tx-Ty\|^2\leq \|Tx-y\|^2+\|Ty-x\|^2\mbox{ for all $x,y\in C$.}
$$ They showed that $T$ has a fixed point whenever $\{T^nx\}$ is a  bounded sequence for some $x\in C$. The same result was obtained by  Takahashi \cite{WT2010} for a  hybrid mapping  in a Hilbert space $H$. A mapping $T:C\rightarrow C$ is called a {\em hybrid mapping} if
$$
\|Tx-Ty\|^2\leq \|x-y\|^2+\langle x-Tx,y-Ty\rangle\mbox{ for all $x,y\in C$.}
$$
In 2011,  Takahashi and Yao \cite{TY2011} proved Pazy's Fixed Point Theorem of a $TJ$-mapping in a Hilbert space $H$. A mapping $T:C\rightarrow C$ is called a {\em $TJ$-mapping} if
$$
2\|Tx-Ty\|^2  \leq  \|x-y\|^2 + \|Tx-y\|^2\mbox{ for all $x,y\in C$.}
$$
In 2012,  Lin and Wang \cite{LW2012} showed Pazy's Fixed Point Theorem  for an  $(a,b)$-monotone  mapping  in a Hilbert space $H$. A mapping $T:C\rightarrow C$ is called an {\em $(a,b)$-monotone  mapping} if for all $x, y \in C$,
$$
 \langle  x-y, Tx-Ty\rangle\geq   a \|Tx-Ty\|^2 + (1-a)\|x-y\|^2 -b \|x-Tx\|^2 -b \|y-Ty\|^2,
$$
 where  $a \in (\frac{1}{2}, \infty)$ and $b\in (- \infty,a) $ 
Very recently, Song et. al. \cite{SKC2016} firstly studied Pazy's Fixed Point Theorem  for a monotone nonexpansive mapping in a uniformly convex Banach space with the partial order ``$\leq$".  
Let $T$ be a mapping with domain $D(T)$ and range $R(T)$ in an ordered Banach space  $E$ endowed with  the partial order ``$\leq$". Then $T: D(T)\to R(T)$  is said to be 
{\em monotone nonexpansive} (\cite{BK15}) if for all $x,y\in D(T)$ with $x\leq y$,
$$Tx\leq Ty \mbox{ and }\|Tx-Ty\|\leq \|x-y\|.$$ Clearly, a monotone nonexpansive mapping may be discontinuous.
In 2015, Bachar and Khamsi \cite{BK15} introduced the concept of a monotone nonexpansive mapping and studied  common approximate fixed points of a monotone nonexpansive semigroup.  Dehaish and  Khamsi \cite{DK15} proved some weak convergence theorems of the Mann iteration for finding some order fixed points of  monotone nonexpansive mappings in  uniformly convex ordered Banach spaces.

The {\em Mann iteration} was introduced by Mann \cite{M53} in 1953 for finding a fixed point of a nonexpansive mapping $T$,
$$
x_{n+1}=\beta_{n}x_n+(1-\beta_{n})Tx_n\mbox{ for each positive integer $n$.}
$$
There had be many convergence conclusions of such an iteration in the past several decades. For more details, see Liu \cite{LL}, Narghirad et al. \cite{NWY}, Suzuki \cite{TS}, Song \cite{S1}, Opial \cite{ZO},  Kim et al. \cite{KLNC}, Okeke and Kim \cite{OK},  Berinde \cite{VB}, George and Shaini \cite{GS}, Gu and  Lu \cite{GL},  Zhou et al. \cite{ZZZ}, Song and Wang \cite{SW},  zhang and Su \cite{ZS},  Zhou \cite{Z08} and the reference therein.
Very recently, Song et. al. studied  weak convergence of  the Mann iteration  for monotone $\alpha$-nonexpansive mappings in a uniformly convex Banach space with the partial order ``$\leq$" under the coefficient condition
``$\limsup\limits_{n\to\infty}\beta_n(1-\beta_n)>0"  \mbox{ or }  ``\liminf\limits_{n\to\infty}\beta_n(1-\beta_n)>0",$ which excludes $\beta_n=0$ or $\frac1n$.\\

In this paper, we consider the Pazy's Fixed Point Theorem for monotone $\alpha$-nonexpansive mappings in a uniformly convex Banach space with the partial order ``$\leq$". That is, for a monotone $\alpha$-nonexpansive mapping $T$, the boundedness of the Picard iteration $\{T^nx_0\}$ for some $x_0$ with $x_0\leq Tx_0$ or $x_0\geq Tx_0$ implies that ordered fixed point set $F_{\geq}(T)\ne\emptyset$ or $F_{\leq}(T)\ne\emptyset$.
Furthermore,  for a monotone $\alpha$-nonexpansive mapping $T$ defined on a closed convex cone $P$ with respect to the partial order ``$\leq$", its fixed point set $F(T)\ne\emptyset$ if and only if  the Picard iteration $\{T^n0\}$ is bounded (where $0$ is the origin). Finally, under the frame of the partial order ``$\leq$" of a Banach space $(E,\|\cdot\|)$ defined by the normal cone $P$,  we will show the weak and strong  convergence of Picard iteration of a monotone $\alpha$-nonexpansive mapping.
\vskip 4mm

\section{\bf  Preliminaries and basic results}
\vskip 3mm

Throughout this paper, let $E$ be an ordered Banach space with the norm $\|\cdot\|$ and the partial order ``$\leq$".   Let $F(T)=\{x\in E: Tx=x\}$ denote the set of all fixed  points  of a mapping $T$.
\vskip 2mm

Let $E$ be a real Banach space and $P$ be a subset of $E$. $P$ is called a {\em closed convex cone} if $P$ is nonempty closed, and $P\ne \{0\}$ with  $P\cap(-P)=\{0\}$,
 and $ax+by \in P$ for all $x,y\in P$ and nonnegative real numbers $a,b$.
The {\em partial order} ``$\leq$", ``$<$" and ``$\ll$"  with respect to $P$ in $E$ are defined as follows: for all $x,y\in E$,
$$\aligned
x\leq y\,\,  &\mbox{ if and on if } \,\, y-x\in P,\\
x< y\,\,   &\mbox{ if and on if } \,\, y-x\in P \mbox{ and } y\neq x,\\
x\ll y \,\, &\mbox{ if and on if }  \,\, y-x\in  \mathring{P} \mbox{ whenever }\mathring{P}\ne\emptyset,
\endaligned$$
 where $ \mathring{P}$ is the interior of $P$. The  partial order ``$\geq$" is defined by $$x\geq y\,\,  \mbox{ if and on if } y\leq x.$$ Then we have \begin{equation}\label{eq:21}
 x=y\,\,  \mbox{ if and on if } y\leq x \mbox{ and }y\geq x.
 \end{equation}  
\vskip 2mm

In the sequel,  $[x,y]$ stands for  the order interval. An {\it order interval} $[x,y]$ for all $x,y\in E$ is given by
\begin{equation}\label{eq:22}
[x,y]=\{z\in E: x\leq z\leq y\}.
\end{equation}
Clearly, the order interval $[x,y]$  are closed and convex by the definition of the partial order  ``$\leq$",   which implies that
\begin{equation}\label{eq:23}
x\leq tx+(1-t)y\leq y
\end{equation}
for all $t$ with $0\leq t\leq 1$ and all $x,y\in E$  with $x\leq y$.
\vskip 2mm

\begin{Definition}(\cite{KD})\label{def:21} Let $(E,\|\cdot\|)$ be  an ordered Banach space with the partial order  ``$\leq$"  with respect to a cone $P$.
	\begin{itemize}
		\item[{(1)}] A sequence $\{x_n\}\subset E$ is called {\em monotonic}  if  $\{x_n\}$ is either increasing, i.e.,  $x_n\leq x_{n+1}$ for all positive integer $n$, or decreasing, i.e.,  $x_n\geq x_{n+1}$ for all positive integer $n$.
		\item[{(2)}] A subset $C$ of $E$ is called  {\em bounded above}  if $C$  has an upper bound with respect to ``$\leq$", i.e., there exists $y\in E$ such that $x\leq y$ for all $x\in C,$ and the least upper bound of $C$ with respect to ``$\leq$" is called  the supremum of $C$,  denote $\sup C$   if it exists.
\item[{(3)}] A subset $C$ of $E$ is called  {\em bounded below}  if $C$  has a lower bound with respect to ``$\leq$", i.e., there exists $z\in E$ such that $z\leq x$ for all $x\in C,$ and the greatest lower bound of $C$ with respect to ``$\leq$" is called  the infimum of $C$,  denote $\inf C$   if it exists.
		\item[{(4)}] The cone $P$ is called {\em normal} if  there exists a constant $\gamma>0$ such that
		$
		\|x\|\leq\gamma\|y\|$  for all $x,y\in E$  with $0\leq x\leq y
		$ or equivalently, if the order interval $[x,y]$ for all $x,y\in E$  with $x\leq y$ is bounded  with respect to the norm $\|\cdot\|$.
		\item[{(4)}]  The cone $P$ is called {\em minihedral}  if $\sup\{x,y\}$ exists for all $x,y\in E$, and {\em strongly minihedral}  if each set which is bounded above has a supremum.
		\end{itemize}
\end{Definition}
\vskip 2mm

\begin{lem}\label{lm:21}(\cite{JS})
	Let $(E,\|\cdot\|)$ be  an ordered Banach space with the partial order  ``$\leq$" with respect to a cone $P$.
\begin{itemize}
		\item[{(i)}] Assume that  $\{x_n\}$ and $\{y_n\}$ are two sequences on $E$ such that $$x_n\leq y_n\mbox{ for each positive integer } n.$$　If $x_n$ and $y_n$ weakly converges to $x$ and $y$, respectively,　then $$x\leq y.$$
\item[{(ii)}] The cone $P$ is normal if and only if there exists a equivalent norm $\|\cdot\|_1$ of $E$ such that
		$$
		\|x\|_1\leq \|y\|_1\mbox{  for all }x,y\in E\mbox{  with }0\leq x\leq y.$$
\end{itemize}
\end{lem}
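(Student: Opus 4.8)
The two items call for quite different tools, so I would handle them separately.

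For part (i), the key point is that the cone $P$ is closed and convex (by the definition in Section 2), hence weakly closed by Mazur's theorem. Since the weak topology is linear, $x_n\rightharpoonup x$ and $y_n\rightharpoonup y$ give $y_n-x_n\rightharpoonup y-x$. The hypothesis $x_n\le y_n$ means precisely $y_n-x_n\in P$ for every $n$, so the weak limit $y-x$ belongs to the weakly closed set $P$; that is, $x\le y$. The whole content is just that the order ``$\le$'' passes to weak limits because its defining set $P$ is weakly closed.

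For part (ii), the ``if'' direction is immediate: if $\|\cdot\|_1$ is equivalent, say $c_1\|x\|\le\|x\|_1\le c_2\|x\|$, and monotone on $P$, then for $0\le x\le y$ one chains $c_1\|x\|\le\|x\|_1\le\|y\|_1\le c_2\|y\|$, so $\|x\|\le(c_2/c_1)\|y\|$ and $P$ is normal with $\gamma=c_2/c_1$. The substantive ``only if'' direction is to manufacture the monotone norm from normality. Writing $U$ for the closed unit ball of $(E,\|\cdot\|)$, I would put $U_1=(U+P)\cap(U-P)$ and let $\|\cdot\|_1$ be its Minkowski gauge. Unwinding the definitions, $x\in U_1$ exactly when $a\le x\le b$ for some $a,b\in U$; this set is symmetric, convex, and contains $U$, so $\|\cdot\|_1\le\|\cdot\|$.

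Normality is what forces $U_1$ to be bounded, and this is the one place the constant $\gamma$ is used: from $a\le x\le b$ with $a,b\in U$ one gets $0\le x-a\le b-a$ and $\|b-a\|\le 2$, hence $\|x-a\|\le 2\gamma$ and $\|x\|\le 2\gamma+1$; thus $U_1\subseteq(2\gamma+1)U$ and $\|\cdot\|_1\ge(2\gamma+1)^{-1}\|\cdot\|$, which yields the equivalence of norms and, together with boundedness and absorption of $U_1$, shows the gauge is genuinely a norm. Monotonicity is then a short check: if $0\le x\le y$ and $y\in rU_1$, then $y\le ru'$ for some $u'\in U$, and since $0\in U$ the chain $r\cdot 0\le x\le y\le ru'$ shows $x\in rU_1$; taking the infimum over admissible $r$ gives $\|x\|_1\le\|y\|_1$.

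I expect the only real obstacle to be pinning down the construction in the ``only if'' direction, namely choosing the correct symmetric order-convex hull $U_1$ of the unit ball and verifying the two-sided equivalence (the lower estimate being precisely where $\gamma$ enters). Part (i) and the ``if'' direction of (ii) are routine once weak closedness of $P$ and the equivalence constants are in hand.
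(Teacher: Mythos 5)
Your proposal is correct, but note that the paper itself offers no proof of this lemma: it is stated with a citation to \cite{JS} (and the normality characterization also appears in standard texts such as \cite{KD}), so there is no in-paper argument to compare against. Your part (i) is the standard Mazur argument ($P$ closed convex, hence weakly closed, applied to $y_n-x_n\rightharpoonup y-x$), and your part (ii) is the classical construction: the ``if'' direction with $\gamma=c_2/c_1$ is routine, and for ``only if'' your set $U_1=(U+P)\cap(U-P)$ is exactly the usual \emph{full hull} of the unit ball, with normality used precisely where you use it, to bound $U_1\subseteq(2\gamma+1)U$ via $0\le x-a\le b-a$, and monotonicity of the gauge following from $0\in U$ as you check. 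All steps (symmetry, convexity, absorption, two-sided norm equivalence, definiteness of the gauge, and the infimum argument for $\|x\|_1\le\|y\|_1$) are sound, so your write-up in fact supplies a complete proof of a result the paper only quotes.
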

\vskip 2mm

\begin{Definition}(\cite{BK15,SPKC2016})\label{def:22} Let $K$ be a nonempty closed convex subset of an ordered Banach space $(E,\leq)$. A mapping $T : K \to E$ is said to be:
	\begin{itemize}
\item[{(1)}] {\em monotone} (\cite{BK15}) if $Tx\leq Ty$ for all $x,y\in K$ with $x\leq y$;
\item[{(2)}] {\em monotone nonexpansive} (\cite{BK15}) if $T$ is monotone  and
$$\|Tx-Ty\|\leq \|x-y\|
$$
for all $x,y\in K$ with $x\leq y$.
\item[{(3)}]{\it monotone $\alpha$-nonexpansive}(\cite{SPKC2016}) if  $T$ is monotone and for some $\alpha<1$,
	$$
	\|Tx-Ty\|^2\leq\alpha\|Tx -y\|^2 +\alpha\|Ty-x\|^2 + (1 -2\alpha) \|x -y\|^2
	$$
	for all $x, y \in K$ with $x\leq y$;
\item[{(4)}] {\it monotone quasi-nonexpansive}(\cite{SPKC2016}) if $F(T)\ne\emptyset$ and
	$$
	\|Tx-p\|\leq\|x-p\|
	$$
	for all $p\in F(T)$ and all $x\in K$ with $x\leq p$ or $x\geq p$.\end{itemize}
\end{Definition}
\vskip 2mm

Obviously, a monotone nonexpansive mapping is  monotone $0$-nonexpansive. In the sequel, we will use the fixed point sets with the partial order $F_{\leq}(T)$ and $F_{\geq}(T)$ given by
$$
F_{\leq}(T)=\{p\in F(T):p\leq x \mbox{ for some }x\in K\}$$  and   $$F_{\geq}(T)=\{p\in F(T):x\leq p\mbox{ for some }x\in K\}.
$$
\vskip 2mm

\begin{lem}\label{lm:22}(\cite[Lemma 2.1]{SPKC2016})
	Let $K$ be a nonempty closed convex subset of an ordered Banach space $(E,\leq )$ and  $T : K\to K$ be a monotone
	$\alpha$-nonexpansive mapping. Then
	\begin{itemize}\item[(1)] $T$ is monotone quasi-nonexpansive if $F_{\leq}(T)\ne\emptyset$ or $F_{\geq}(T)\ne\emptyset$;
		\item[(2)] for all $x,y\in K$  with $x\leq y$ (or $y\leq x$),
		$$
		\aligned
		&\quad \|Tx-Ty\|^2\\
		&\leq \| x-y\|^2+\frac{2\alpha}{1-\alpha}\|Tx-x\|^2+\frac{2|\alpha|}{1-\alpha}\|Tx-x\|(\|x-y\|+\|Tx-Ty\|).
		\endaligned
		$$
	\end{itemize}
\end{lem}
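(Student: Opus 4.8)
The plan is to derive both parts directly from the defining inequality of a monotone $\alpha$-nonexpansive mapping, namely
$$\|Tx-Ty\|^2\leq\alpha\|Tx-y\|^2+\alpha\|Ty-x\|^2+(1-2\alpha)\|x-y\|^2 \quad (\star)$$
which holds for every comparable pair. Before anything else I would note that $(\star)$ is symmetric under interchanging $x$ and $y$ (the two middle terms merely trade places and every norm is unchanged), so the hypothesis ``$x\leq y$ \emph{or} $y\leq x$'' always permits me to invoke $(\star)$ for the comparable pair. This single observation disposes of the parenthetical case in (2) and of the two order cases in (1) simultaneously.

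For part (1), I would fix $p\in F(T)$ (which exists precisely because $F_{\leq}(T)\ne\emptyset$ or $F_{\geq}(T)\ne\emptyset$) together with a comparable point $x$. Applying $(\star)$ to the pair $\{x,p\}$ and using $Tp=p$ collapses the right-hand side to $\alpha\|Tx-p\|^2+\alpha\|x-p\|^2+(1-2\alpha)\|x-p\|^2$, so the $\|Tx-p\|^2$ terms can be gathered on the left to give $(1-\alpha)\|Tx-p\|^2\leq(1-\alpha)\|x-p\|^2$. Since $\alpha<1$, dividing by $1-\alpha>0$ yields $\|Tx-p\|\leq\|x-p\|$, which together with $F(T)\ne\emptyset$ is exactly the definition of monotone quasi-nonexpansiveness.

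For part (2), the idea is to estimate the two mixed norms on the right of $(\star)$ in terms of $a=\|Tx-x\|$, $d=\|x-y\|$ and $s=\|Tx-Ty\|$ by inserting the telescoping identities $Tx-y=(Tx-x)+(x-y)$ and $Ty-x=(Ty-Tx)+(Tx-x)$, and then absorbing the resulting $s^2$ contribution. The triangle inequality supplies the upper bounds $\|Tx-y\|^2\leq(a+d)^2$ and $\|Ty-x\|^2\leq(s+a)^2$, while the reverse triangle inequality supplies the lower bounds $\|Tx-y\|^2\geq(a-d)^2$ and $\|Ty-x\|^2\geq(s-a)^2$. Substituting the appropriate pair and collecting terms leads, after combining $\alpha d^2+(1-2\alpha)d^2=(1-\alpha)d^2$, to $(1-\alpha)s^2\leq(1-\alpha)d^2+2\alpha a^2+2|\alpha|\,a(d+s)$, and dividing by $1-\alpha>0$ produces the claimed bound.

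The one genuinely delicate step — the place I would slow down — is the sign of $\alpha$, which is exactly what forces the asymmetric appearance of $\alpha$ on the $\|Tx-x\|^2$ term but $|\alpha|$ on the cross term. When $\alpha\geq0$ the coefficients multiply the inequalities in the order-preserving direction, so the \emph{upper} bounds $(a+d)^2$ and $(s+a)^2$ are the correct ones and the cross term emerges as $2\alpha\,a(d+s)=2|\alpha|\,a(d+s)$. When $\alpha<0$ the multiplication reverses the inequalities, so I must instead feed in the \emph{lower} bounds $(a-d)^2$ and $(s-a)^2$; the expanded brackets then contribute $+2\alpha a^2$ to the $\|Tx-x\|^2$ coefficient (matching the stated $\frac{2\alpha}{1-\alpha}$ exactly, rather than $\frac{2|\alpha|}{1-\alpha}$) while the cross term becomes $-2\alpha\,a(d+s)=2|\alpha|\,a(d+s)$. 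Thus both cases land on the single stated inequality, the modulus appearing precisely because the cross-term sign is flipped in the negative-$\alpha$ case; verifying that the $\|Tx-x\|^2$ coefficient stays $2\alpha$ in both regimes is the bookkeeping I would check most carefully.
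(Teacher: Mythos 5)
Your proof is correct: part (1) is the standard collapse of the defining inequality at a fixed point (using $\alpha<1$ to divide by $1-\alpha>0$), and in part (2) your case split on the sign of $\alpha$ — upper bounds $(a+d)^2$, $(s+a)^2$ for $\alpha\geq0$ versus reverse-triangle lower bounds $(a-d)^2$, $(s-a)^2$ for $\alpha<0$ — does land on the single stated inequality with coefficient $\frac{2\alpha}{1-\alpha}$ on $\|Tx-x\|^2$ and $\frac{2|\alpha|}{1-\alpha}$ on the cross term in both regimes, as I verified by expanding both cases. Note that the paper itself gives no proof of this lemma (it is quoted verbatim from \cite[Lemma 2.1]{SPKC2016}), so your argument fills in the omitted proof, and it is the natural one based on the decompositions $Tx-y=(Tx-x)+(x-y)$ and $Ty-x=(Ty-Tx)+(Tx-x)$ used in that reference.
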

\vskip 2mm

\begin{Definition}\label{def:23} Let $E$ ba a Banach space. Then \begin{itemize}\item[(1)] a function $\delta_E:[0,2]\to[0,1]$ is said to be
the {\em modulus of convexity of $E$} if
$$\delta_E(\varepsilon)=\inf\{1-\frac{\|x+y\|}2;\ \|x\|\leq1,\
\|y\|\leq1,\ \|x-y\|\geq\varepsilon\};$$
\item[(2)] a number $\varepsilon_0(E)$ is said to be the {\em characteristic of convexity of $E$} if $$\varepsilon_0(E)=\sup\{\varepsilon\in[0,2];\delta_E(\varepsilon)=0\}.$$
\end{itemize}
\end{Definition}

\begin{Definition}\label{def:24}
	A Banach space $E$ is said to be
\begin{itemize}\item[(1)] {\em uniformly convex} if $\delta_E(\varepsilon)>0$
 for all $\varepsilon\in(0,2]$ or equivalently if $\varepsilon_0(E)=0$;
	\item[(2)]{\em strictly convex} if $\delta_E(2)=1$.
\end{itemize}
\end{Definition}
\vskip 2mm

The following properties of the modulus of convexity of a Banach space $E$ were found in the references \cite{ARS,GK,KS}.
\vskip 2mm

\begin{lem} \label{lm:23} Let $E$ be a Banach space with the modulus of convexity $\delta_E(\cdot)$ and the characteristic of convexity $\varepsilon_0$.  Then we have the following:
 \begin{itemize}\item[(i)] $\delta_E(\cdot)$ is continuous on $[0,2)$;
 \item[(ii)] $\delta_E(\cdot)$ is strictly increasing on $[\varepsilon_0,2]$;
\item[(iii)]If, in addition, $E$ is uniformly convex, $r>0$ and $x,y\in E$ with $ \|x\|\leq r,\ \|y\|\leq r$, then \begin{equation}\label{eq:24}
	\|\lambda x+(1-\lambda)y\|\leq
r\left[1-2\min\{\lambda,1-\lambda\}\delta_E\left(\frac{\|x-y\|}r\right)\right]\mbox{ for all }\lambda\in[0,1].
	\end{equation}
	In particular, taking $\lambda=\frac12$,
	\begin{equation}\label{eq:25}
	\Big\|\frac{x+y}2\Big\|\leq r\left[1-\delta_E\left(\frac{\|x-y\|}r\right)\right].
	\end{equation}	
\item[(iv)] Each  uniformly convex Banach space has the Kadec-Klee property, i.e.,
$$\mbox{\em weak-}\lim_{n\to\infty}x_n=x\mbox{  and }\lim_{n\to\infty}\|x_n\|=\|x\|\mbox{ implies }\lim_{n\to\infty}x_n=x.$$
\end{itemize}
\end{lem}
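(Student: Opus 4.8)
The plan is to prove the four assertions in order of logical dependence rather than in the order listed, since the case $\lambda=\frac12$ of (iii), namely \reff{eq:25}, is the basic inequality from which everything is bootstrapped, while (iv) ultimately rests on (iii) together with the positivity and monotonicity of $\delta_E$. I would therefore establish \reff{eq:25} first by a scaling reduction to the definition of $\delta_E$: given $x,y\in E$ with $\|x\|\le r$ and $\|y\|\le r$, set $u=\frac{x}{r}$ and $v=\frac{y}{r}$, so that $\|u\|\le1$, $\|v\|\le1$ and $\|u-v\|=\frac{\|x-y\|}{r}=:\varepsilon$. Since the infimum defining $\delta_E(\varepsilon)$ ranges over all unit-ball pairs separated by \emph{at least} $\varepsilon$, the pair $(u,v)$ is admissible, whence $1-\frac{\|u+v\|}{2}\ge\delta_E(\varepsilon)$; multiplying by $r$ yields exactly \reff{eq:25}. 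To pass to the general \reff{eq:24} I would assume without loss of generality $\lambda\le\frac12$ (the case $\lambda\ge\frac12$ is symmetric after interchanging $x$ and $y$) and write $\lambda x+(1-\lambda)y=2\lambda\cdot\frac{x+y}{2}+(1-2\lambda)y$. Applying the triangle inequality and then bounding $\bigl\|\frac{x+y}{2}\bigr\|$ by \reff{eq:25} and $\|y\|$ by $r$ collapses the right-hand side to $r\bigl[1-2\lambda\,\delta_E(\frac{\|x-y\|}{r})\bigr]$; since $\lambda=\min\{\lambda,1-\lambda\}$ in this case, this is \reff{eq:24}. Both steps are routine.

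The structural facts (i)--(ii) are the technical heart, and I would lean on the classical result that $\varepsilon\mapsto\frac{\delta_E(\varepsilon)}{\varepsilon}$ is nondecreasing on $(0,2]$ (Figiel), which I would invoke from the references \cite{ARS,GK,KS}. Strict monotonicity (ii) then drops out quickly: for $\varepsilon_0\le\varepsilon_1<\varepsilon_2\le2$ one has $\delta_E(\varepsilon_2)\ge\frac{\varepsilon_2}{\varepsilon_1}\delta_E(\varepsilon_1)>\delta_E(\varepsilon_1)$ as soon as $\delta_E(\varepsilon_1)>0$, and $\delta_E(\varepsilon_1)>0$ for every $\varepsilon_1>\varepsilon_0$ by the very definition of the characteristic $\varepsilon_0$; the left endpoint $\varepsilon_1=\varepsilon_0$ is covered by noting $\delta_E(\varepsilon_0)=0<\delta_E(\varepsilon_2)$, using (i). Continuity (i) is where I expect the main obstacle: monotonicity of $\delta_E$ by itself does not exclude jumps, so I would establish a local Lipschitz estimate on each interval $[0,2-\eta]$ by a perturbation argument -- given a near-optimal pair for $\varepsilon$, nudge it to an admissible pair for a nearby $\varepsilon'$ and control the change of $\|u+v\|$ through the continuity of the norm and of the vector operations -- which together with monotonicity yields continuity on $[0,2)$. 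This perturbation bookkeeping is the one genuinely delicate ingredient, and all three cited references carry it out in detail.

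Finally, for the Kadec--Klee property (iv) I would argue by normalization. If $x=0$ the conclusion is immediate from $\|x_n\|\to0$, so assume $x\ne0$ and set $y_n=\frac{x_n}{\|x_n\|}$ and $y=\frac{x}{\|x\|}$ for $n$ large; testing against functionals shows $y_n\rightharpoonup y$, with $\|y\|=1$. Weak lower semicontinuity of the norm gives $\liminf_n\bigl\|\frac{y_n+y}{2}\bigr\|\ge\|y\|=1$, while $\bigl\|\frac{y_n+y}{2}\bigr\|\le1$ always, so $\bigl\|\frac{y_n+y}{2}\bigr\|\to1$. Feeding this into \reff{eq:25} with $r=1$ yields $\delta_E(\|y_n-y\|)\le1-\bigl\|\frac{y_n+y}{2}\bigr\|\to0$. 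Since $E$ is uniformly convex, $\delta_E$ is strictly positive on $(0,2]$ and, being nondecreasing, cannot tend to $0$ along a sequence of arguments bounded away from $0$; hence $\|y_n-y\|\to0$. Multiplying back by $\|x_n\|\to\|x\|$ gives $x_n\to x$ in norm, which completes the argument.
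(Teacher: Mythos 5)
The paper contains no proof of Lemma \ref{lm:23} to compare against: it is stated as known, with everything deferred wholesale to the references \cite{ARS,GK,KS}. Your reconstruction is correct and is essentially the standard argument from those sources, so your write-up is strictly more detailed than the paper's treatment. The scaling reduction giving \reff{eq:25} is immediate from Definition \ref{def:23} (the pair $(x/r,y/r)$ is admissible for $\varepsilon=\|x-y\|/r$ because the defining constraint is $\|u-v\|\geq\varepsilon$), and the convex split $\lambda x+(1-\lambda)y=2\lambda\cdot\frac{x+y}{2}+(1-2\lambda)y$ for $\lambda\leq\frac12$ does produce exactly the factor $2\min\{\lambda,1-\lambda\}$ in \reff{eq:24}; it is worth noting, as your argument implicitly shows, that this part needs no uniform convexity at all. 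Your normalization proof of the Kadec--Klee property (iv) is complete: $y_n=x_n/\|x_n\|$ is defined for large $n$ since $\|x_n\|\to\|x\|>0$, weak lower semicontinuity of the norm forces $\big\|\frac{y_n+y}{2}\big\|\to1$, and then \reff{eq:25} with $r=1$ together with positivity and monotonicity of $\delta_E$ on $(0,2]$ forces $\|y_n-y\|\to0$, whence $x_n=\|x_n\|y_n\to x$. The two ingredients you invoke rather than prove --- Figiel's lemma that $\varepsilon\mapsto\delta_E(\varepsilon)/\varepsilon$ is nondecreasing, which drives (ii), and the perturbation/local-Lipschitz bookkeeping behind continuity (i) --- are precisely the nontrivial parts, and you source them explicitly to the same references the paper cites, so this is not a gap relative to the paper. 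Two small points to tidy in (ii): the Figiel ratio step requires $\varepsilon_1>0$, which your separate endpoint case already covers, and the endpoint claim $\delta_E(\varepsilon_0)=0$ uses continuity only when $0<\varepsilon_0<2$ (it follows directly from the definition when $\varepsilon_0=0$, and the case $\varepsilon_0=2$ makes the assertion on $[\varepsilon_0,2]$ vacuous).
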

\vskip 2mm

The following conclusion is well known. For the more detail, see  the references \cite{KD,KS,WT}.
\vskip 2mm

\begin{lem}\label{lm:24}   Let $C$ be a nonempty closed convex subset of a reflexive Banach space $E$. Assume that $f:C\to (-\infty,+\infty)$ is a proper convex lower semi-continuous  and coercive function $($i.e., $\lim_{\|x\|\to \infty}f(x)=\infty$$)$. Then there exists $x\in C$  such that
	$$
	f(x)=\inf_{y\in C} f(y).
	$$
\end{lem}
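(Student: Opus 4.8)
The plan is to invoke the \emph{direct method} of the calculus of variations. Set $m=\inf_{y\in C}f(y)$. Since $f$ is proper, it is finite somewhere on $C$, so $m<+\infty$, and I may choose a minimizing sequence $\{x_n\}\subset C$ with $f(x_n)\to m$. The whole argument then reduces to showing that $m$ is finite and is attained at a weak limit of $\{x_n\}$.

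First I would show that $\{x_n\}$ is bounded. If it were not, then after passing to a subsequence we would have $\|x_n\|\to\infty$; coercivity would then force $f(x_n)\to+\infty$, contradicting $f(x_n)\to m<+\infty$. Hence $\{x_n\}$ is norm-bounded. Since $E$ is reflexive, every bounded sequence admits a weakly convergent subsequence, so after relabelling $x_n\rightharpoonup x$ for some $x\in E$. Because $C$ is closed and convex it is weakly closed (Mazur's theorem), and therefore $x\in C$.

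The decisive step is to pass the infimum through the weak limit, and this is where convexity is essential: a merely strongly lower semicontinuous function need not be weakly lower semicontinuous. However, a convex and strongly lower semicontinuous $f$ has closed convex sublevel sets $\{y\in C:f(y)\le\lambda\}$, which are consequently weakly closed; equivalently, $f$ is \emph{weakly lower semicontinuous}. Applying this to the weakly convergent minimizing sequence yields $f(x)\le\liminf_{n\to\infty}f(x_n)=m$. Since $x\in C$ also gives $f(x)\ge m$, I conclude that $f(x)=m$ (in particular $m>-\infty$), so $x$ is the desired minimizer.

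I expect the main obstacle to be precisely the weak lower semicontinuity of $f$; once that is established (via the weak closedness of convex sublevel sets, or equivalently through Mazur's lemma approximating the weak limit by convex combinations lying in sublevel sets), the remaining ingredients---coercivity giving boundedness and reflexivity giving weak subsequential compactness---combine routinely to complete the proof.
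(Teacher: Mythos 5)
Your proof is correct: coercivity forces the minimizing sequence to be bounded, reflexivity provides a weakly convergent subsequence, Mazur's theorem keeps the weak limit in $C$, and the weak lower semicontinuity of a convex, strongly lower semicontinuous function (via its weakly closed convex sublevel sets) passes the infimum to the limit, simultaneously showing $\inf_{y\in C}f(y)>-\infty$. The paper itself offers no proof of this lemma, simply citing it as well known from \cite{KD,KS,WT}, and your argument is exactly the standard direct-method proof contained in those references, so it agrees with the paper's intended justification in full.
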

\vskip 4mm

\section{\bf Pazy type Fixed Point Theorems with the partial order}
\vskip 3mm

Let $K$ be a nonempty closed convex subset of an ordered Banach space $(E,\leq)$ and let $T:K\to K$ be a  monotone $\alpha$-nonexpansive mapping. In this section, we
consider the Picard iteration defined by
\begin{equation}\label{eq:31}
x_{n+1}=Tx_n=T^nx_0
\end{equation}
for fixed $x_0\in K$ and each positive integer $n\geq 1$.
\begin{Definition}\label{def:31} Let $T$ be a mapping on a Banach space $E$ and $x_0\in D(T)$, the domain of $T$. Then the set of points, denoted by $O(x_0)$ is called the {\em orbit of $x_0$ under $T$} if   $$O(x_0)=\{T^nx_0;n=0,1,2,\cdots\},$$ and its closure is called the {\em closed orbit}.
\end{Definition}

\vskip 2mm
The following technical lemma plays a key role in the proof of main results of this work, which may be found in the reference \cite{JS}. For the completeness, we give its proof. \vskip 2mm

\begin{lem} \label{lm:31} Let $(E,\|\cdot\|)$ be a Banach space with the partial order  ``$\leq$". Assume that a sequence $\{x_n\}\subset E$ is  monotonic.  \begin{itemize}\item[(i)] If there exists a subsequence $\{x_{n_k}\}\subset\{x_n\}$ such that $x_{n_k}$ weakly converges to some point $x\in E$, then $x_n\leq x$ for all positive integer $n$ when $\{x_n\}$ is increasing or $x_n\geq x$ for all positive integer $n$ when $\{x_n\}$ is decreasing.
\item[(ii)] If $(E,\|\cdot\|)$ is reflexive and $\{x_n\}$ is bounded  with respect to the norm $\|\cdot\|$, then $x_n$ weakly converges to some point $x\in E$.
\end{itemize}
\end{lem}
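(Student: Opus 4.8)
The plan is to derive part (i) as a direct consequence of the order-preservation of weak limits recorded in Lemma \ref{lm:21}(i), and then to bootstrap part (ii) from part (i) together with the antisymmetry of the partial order and the weak sequential compactness of bounded sets in a reflexive space.

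For part (i), I would treat the increasing case, the decreasing case being entirely symmetric. Fix a positive integer $m$. Since $\{x_n\}$ is increasing, $x_m\leq x_{n_k}$ for every index $n_k\geq m$. Viewing $x_m$ as a constant sequence weakly converging to $x_m$ and $\{x_{n_k}\}_{n_k\geq m}$ as a sequence weakly converging to $x$, Lemma \ref{lm:21}(i) yields $x_m\leq x$. As $m$ is arbitrary, $x_n\leq x$ for all positive integers $n$. Replacing ``$\leq$'' by ``$\geq$'' throughout handles the decreasing case, giving $x_n\geq x$.

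For part (ii), since $E$ is reflexive and $\{x_n\}$ is bounded with respect to $\|\cdot\|$, every subsequence admits a further weakly convergent subsequence. I first extract $x_{n_k}$ weakly converging to some $x\in E$. The crux is to show that the \emph{whole} sequence converges weakly to $x$; equivalently, that every weak subsequential limit equals $x$. Let $\{x_{m_j}\}$ be any subsequence weakly converging to some $y$. Applying part (i) to both convergent subsequences gives, in the increasing case, $x_n\leq x$ and $x_n\leq y$ for all $n$; in particular $x_{m_j}\leq x$ and $x_{n_k}\leq y$ for all $j,k$. Passing to weak limits via Lemma \ref{lm:21}(i) in each of these inequalities (comparing against the constant sequences $x$ and $y$) produces $y\leq x$ and $x\leq y$, whence $x=y$ by the antisymmetry \reff{eq:21}. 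Thus all weak subsequential limits coincide.

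To conclude, I would run the standard subsequence argument: if $\{x_n\}$ did not converge weakly to $x$, then some subsequence would remain outside a fixed weak neighborhood of $x$, and by reflexivity that subsequence would possess a weakly convergent further subsequence whose limit necessarily differs from $x$, contradicting the coincidence of all subsequential limits just established. Hence $x_n$ weakly converges to $x$. The main obstacle is precisely this uniqueness-of-limit step in (ii): it is where monotonicity is indispensable, since it is monotonicity, through part (i), that forces two-sided order bounds between any pair of subsequential limits and thereby their equality via antisymmetry, rather than mere boundedness or reflexivity alone.
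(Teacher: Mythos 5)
Your proof is correct and follows essentially the same route as the paper's: part (i) by comparing a fixed term against the tail of the weakly convergent subsequence via Lemma \ref{lm:21}(i), and part (ii) by weak sequential compactness plus uniqueness of subsequential weak limits through two-sided order bounds and antisymmetry \reff{eq:21}. The only (cosmetic) differences are that you invoke part (i) directly where the paper re-runs the interleaving argument, and you spell out the final standard subsequence/contradiction step that the paper leaves implicit --- both are slight improvements in rigor, not a different method.
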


\begin{proof}
(i) Assume that  $\{x_n\}$ is increasing, i.e.,  $x_n\leq x_{n+1}$ for all positive integer $n$.  Then for given $n$, there exists a positive integer $k_0$ such that $n<n_{k_0}$, and hence, $x_n\leq x_{n_{k_0}}\leq x_{n_k}$ for all $k>k_0$. It follows from Lemma \ref{lm:21} (i) and the weak convergence of $x_{n_k}$  that $x_n\leq x.$ Since $n$ is arbitrary, $x_n\leq x$ for all positive integer $n$.

 Using the same proof technique, it is easy to prove that $x_n\geq x$ for all positive integer $n$ when $\{x_n\}$ is decreasing.

(ii) Without loss of generality, we may assume that  $\{x_n\}$ is increasing. It follows from the reflexivity of $E$ and the boundedness of $\{x_n\}$ that $\{x_n\}$ is weakly sequentially compact. Then we may choose two  subsequences $\{x_{n_k}\}$ and $\{x_{n_j}\}$ in $\{x_n\}$ such that $x_{n_k}$ and $\{x_{n_j}\}$ weakly converge to  $x$ and $y$, respectively.  We must have $x=y$. In fact, using the the same proof technique as (i), for fixed $n_k$, there is large enough $n_j$ such that $x_{n_{k}}\leq x_{n_j}$. Then by Lemma \ref{lm:21} (i), we have $x_{n_k}\leq y$, and so for all $k$, $x_{n_k}\leq y$. Again using Lemma \ref{lm:21} (i), we have $x\leq y$. Using the same proof technique, we also have $y\leq x$. Thus $x=y.$ Which means that any subsequences $\{x_{n_k}\}$ of $\{x_n\}$ weakly converges to  $x$, and hence, $\{x_n\}$ weakly converges to  $x$.
\end{proof}\vskip 2mm

Now we prove some existence theorems of fixed points with the partial order  ``$\leq$" of  monotone $\alpha$-nonexpansive mappings in a  partial order Banach space.
\vskip 2mm

\begin{thm} \label{t:32} Let $K$ be a nonempty  closed convex subset of a uniformly convex  Banach space $(E,\|\cdot\|)$ with the partial order ``$\leq$" and  $T : K\to K$ be a
	monotone $\alpha$-nonexpansive mapping.  \begin{itemize}\item[(i)] If $x_0\leq Tx_0$ and the orbit $O(x_0)$ is bounded with respect to the norm $\|\cdot\|$,
	then $F_{\geq}(T)\ne\emptyset$.
\item[(ii)] If $F_{\geq}(T)\ne\emptyset$,
	then there exists $x_0\in K$ such that the orbit $O(x_0)$ is bounded with respect to the norm $\|\cdot\|$.
\end{itemize}
\end{thm}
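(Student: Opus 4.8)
I would dispose of (ii) first, since it is immediate: if $F_{\geq}(T)\neq\emptyset$, choose $p\in F(T)$; then $T^np=p$ for every $n$, so the orbit $O(p)=\{p\}$ is trivially bounded, and $x_0:=p$ witnesses the claim.

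For (i) the plan runs as follows. Since $x_0\leq Tx_0$ and $T$ is monotone, induction gives $T^nx_0\leq T^{n+1}x_0$, so the Picard sequence $x_n:=T^nx_0$ is increasing. As $E$ is uniformly convex it is reflexive, so by Lemma \ref{lm:31}(ii) the bounded monotone sequence $\{x_n\}$ converges weakly to some $x$, and since $K$ is closed and convex (hence weakly closed) we have $x\in K$; moreover Lemma \ref{lm:31}(i) yields $x_n\leq x$ for all $n$. The easy half of the fixed-point property, $x\leq Tx$, comes for free: from $x_n\leq x$ and monotonicity $x_{n+1}=Tx_n\leq Tx$, and since $x_{n+1}$ converges weakly to $x$, Lemma \ref{lm:21}(i) gives $x\leq Tx$. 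The genuine content is to manufacture a point that is actually fixed, and for this the order alone is useless (everything lies \emph{below} $Tx$), so one must bring in the metric $\alpha$-nonexpansive estimate.

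To that end I would run an asymptotic-center argument \emph{confined to the set of upper bounds}. Put $\tau=K\cap\bigcap_{n}(x_n+P)$, the closed convex set of all $y\in K$ with $x_n\leq y$ for every $n$; it is nonempty because $x\in\tau$. On $\tau$ consider $r(y)=\limsup_{n\to\infty}\|x_n-y\|$, which is convex, $1$-Lipschitz and coercive, so by Lemma \ref{lm:24} it attains its infimum at some $c\in\tau$, and uniform convexity forces this minimizer to be unique (if $r(c_1)=r(c_2)=\min r$ with $c_1\neq c_2$, applying \eqref{eq:25} to $x_n-c_1$ and $x_n-c_2$ and taking $\limsup$ makes $r(\tfrac{c_1+c_2}{2})$ strictly smaller, a contradiction). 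Because $x_n\leq c$, monotonicity gives $x_{n+1}=Tx_n\leq Tc$, so $Tc$ is again an upper bound of $\{x_n\}$ and $Tc\in\tau$. The crux is then the single inequality $r(Tc)\leq r(c)$: once it holds, minimality of $c$ over $\tau$ forces $r(Tc)=r(c)=\min r$, whence $Tc=c$ by uniqueness of the center, and since $x_0\leq c$ we conclude $c\in F_{\geq}(T)$. To obtain $r(Tc)\leq r(c)$ I would apply Lemma \ref{lm:22}(2) to the ordered pair $x_n\leq c$ and substitute $Tx_n=x_{n+1}$, which gives
\[
\|x_{n+1}-Tc\|^2\leq \|x_n-c\|^2+\frac{2\alpha}{1-\alpha}\|x_{n+1}-x_n\|^2+\frac{2|\alpha|}{1-\alpha}\|x_{n+1}-x_n\|\big(\|x_n-c\|+\|x_{n+1}-Tc\|\big);
\]
since $\limsup_n\|x_{n+1}-Tc\|=r(Tc)$ and $\limsup_n\|x_n-c\|=r(c)$, passing to the $\limsup$ yields $r(Tc)\leq r(c)$ \emph{provided the two error terms vanish}, i.e. provided $\|T^{n+1}x_0-T^nx_0\|\to0$.

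This last point — asymptotic regularity of the Picard orbit — is exactly where I expect the real difficulty to sit. Unlike the Mann scheme, the plain iteration carries no averaging coefficient $\beta_n(1-\beta_n)$ to feed into the uniform-convexity estimates \eqref{eq:24}--\eqref{eq:25}, so $\|x_{n+1}-x_n\|\to0$ is not automatic. I would try to extract it from the increasing, bounded, weakly convergent structure: show that $\lim_n\|x_n-c\|$ exists and then force $\|x_{n+1}-x_n\|\to0$ by applying \eqref{eq:25} to the vectors $c-x_n$ and $c-x_{n+1}$ (whose midpoint is again comparable to the sequence), the point being that the two distances share a common limit while their average cannot drop below it. (For the subrange $0\leq\alpha\leq\tfrac12$ the obstacle is softer, since one may instead take $\limsup$ directly in the defining inequality of a monotone $\alpha$-nonexpansive map, where all coefficients are nonnegative; the general $\alpha<1$ handled through Lemma \ref{lm:22}(2) is what makes the regularity step essential.)
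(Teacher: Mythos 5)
Your part (ii) is correct and in fact simpler than the paper's: the paper takes $z\in F_{\geq}(T)$, an arbitrary $x_0\leq z$, and runs the monotone quasi-nonexpansive estimate $\|T^nx_0-z\|\leq\|x_0-z\|$ from Lemma \ref{lm:22}(1), whereas your choice $x_0=p$ makes the orbit a singleton. For part (i) your architecture coincides with the paper's step for step: the increasing orbit, the weak limit $x$ via Lemma \ref{lm:31}, the closed convex set of upper bounds (your $\tau$ is the paper's $C=\bigcap_n K_n$), the asymptotic-radius functional minimized by Lemma \ref{lm:24}, the observation $Tc\in\tau$, and the endgame ``$r(Tc)\leq r(c)$ forces $Tc=c$'' (your uniqueness-of-center argument and the paper's explicit computation with $\delta_E$ and the interval $[r,r+\varepsilon]$ are interchangeable here, both handling $r=0$ trivially). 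The single step where you diverge is the inequality $r(Tc)\leq r(c)$, and that is precisely where your proof has a genuine hole.

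You derive $r(Tc)\leq r(c)$ from Lemma \ref{lm:22}(2), which costs you error terms in $\|T^{n+1}x_0-T^nx_0\|$, and you candidly concede that you need asymptotic regularity of the Picard orbit. You never obtain it, and your sketched repair cannot work: first, the existence of $\lim_n\|x_n-c\|$ is unsupported (minimality of $c$ controls only a $\limsup$, and there is no Fej\'er-type monotonicity available before $c$ is known to be fixed); second, \eqref{eq:25} gives only an \emph{upper} bound on $\|c-\frac{x_n+x_{n+1}}{2}\|$, and to extract $\delta_E(\|x_{n+1}-x_n\|/r)\to0$ you would need a matching \emph{lower} bound on that midpoint distance, which is unavailable --- the midpoint lies in the order interval $[x_n,x_{n+1}]$, but Theorem \ref{t:32} assumes no monotonicity of the norm (that hypothesis enters only in Section 4), so order location yields no norm information. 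The paper needs no regularity at all: it applies the defining $\alpha$-nonexpansive inequality directly with $x=T^nx_0$, $y=z$, and exploits the index-shift identities $\limsup_n\|T^{n+1}x_0-Tz\|=\limsup_n\|Tz-T^nx_0\|=f(Tz)$ and $\limsup_n\|T^{n+1}x_0-z\|=\limsup_n\|T^nx_0-z\|=f(z)$; the term $\alpha\|Tz-T^nx_0\|^2$ that you feared has the \emph{same} $\limsup$ as the left-hand side and is absorbed, leaving $(1-\alpha)f(Tz)^2\leq(1-\alpha)f(z)^2$, hence $f(Tz)\leq f(z)$ since $\alpha<1$. You actually noticed this route for $0\leq\alpha\leq\frac12$ but dismissed it for general $\alpha$; it is exactly the paper's argument for all $\alpha<1$, the absorption of the shifted term replacing the regularity you were trying to manufacture. (To be fair, for $\alpha<0$ or $\alpha>\frac12$ the paper's term-by-term passage to $\limsup$ with negative coefficients itself deserves more care than the paper gives it, but the structural point stands: the detour through Lemma \ref{lm:22}(2) is what created your unfillable gap, and the direct inequality avoids it.)
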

\vskip 2mm

\begin{proof} (i)  Since $x_0\leq Tx_0$, it follows from the monotonicity of $T$ that  $Tx_0\leq T^2x_0,$ and so $$T^2x_0\leq T^3x_0,\, \cdots, \, T^nx_0\leq T^{n+1}x_0,\, \cdots.$$ Namely,
$$x_0\leq Tx_0\leq T^2x_0\leq T^3x_0 \leq \cdots  \leq T^nx_0\leq T^{n+1}x_0  \cdots.$$
By Lemma \ref{lm:31}, there exists $x\in E$ such that $T^nx_0$ weakly converges to $x$ and $T^nx_0\leq x$ for all positive integer $n$. Since the closed convexity of  $K$ implies that $K$ is weakly sequentially closed,  we have $x\in K.$

	Let $K_n=\{y\in K; T^nx_0\leq y\}$ for all positive integer $n$. Clearly, for each $n$, $K_n$ is closed convex and $x\in K_n$ for all positive integer $n$. Let $C=\bigcap\limits_{n=1}^\infty K_n$. Clearly, $C\subset K$ is a nonempty closed convex  ($x\in C$). Since the orbit $O(x_0)=\{T^nx_0; n=0,1,2,\cdots\}$ is bounded with respect to the norm $\|\cdot\|$, we may define a function $f:C\to[0,+\infty)$ as follows
	$$
	f(y)=\limsup_{n\to\infty}\|T^nx_0-y\|\mbox{ for all $y\in C.$}
	$$
It is obvious that $f$ is a proper convex,  continuous  and coercive function.
	  It follows from Lemma \ref{lm:24} that there exists $z\in C$ such that
	\begin{equation}\label{eq:32}
	f(z)=\limsup_{n\to\infty}\|T^nx_0-z\|=\inf_{y\in C} f(y)=r.
	\end{equation}
	By the definition of $C$, we have
	$$
	T^nx_0\leq z\mbox{ for all positive integer }n,
	$$
	and hence, $$
	T^nx_0\leq T^{n+1}x_0\leq Tz\mbox{ for all positive integer }n,
	$$
which implies that $Tz\in C$. Thus $\dfrac{z+Tz}2\in C$, and so, it follows from  \eqref{eq:32} that
	\begin{equation}\label{eq:33}
	r=f(z)\leq f\Big(\frac{z+Tz}2\Big)\quad \mbox{and}\quad r=f(z)\leq f(Tz).
	\end{equation}
	
On the other hand, it follows from the definition of $T$ that
	$$
	\|T^{n+1}x_0-Tz\|^2\leq\alpha\|T^{n+1}x_0 -z\|^2 +\alpha\|Tz-T^nx_0\|^2 + (1 -2\alpha) \|T^nx_0 -z\|^2,
	$$
and so,
\begin{align}
	(\limsup_{n\to\infty}\|T^{n+1}x_0-Tz\|)^2\leq&\alpha(\limsup_{n\to\infty}\|T^{n+1}x_0 -z\|)^2 +\alpha(\limsup_{n\to\infty}\|Tz-T^nx_0\|)^2\nonumber\\
& + (1 -2\alpha) (\limsup_{n\to\infty}\|T^nx_0 -z\|)^2,\nonumber
	\end{align}
that is, $$(1 -\alpha)(\limsup_{n\to\infty}\|Tz-T^nx_0\|)^2\leq(1 -\alpha) (\limsup_{n\to\infty}\|T^nx_0 -z\|)^2.$$
Thus, we have
	\begin{equation}\label{eq:34}
	f(Tz)=\limsup_{n\to\infty}\|T^nx_0-Tz\|\leq \limsup_{n\to\infty}\|T^nx_0 -z\|=f(z)=r.
	\end{equation}
Combining \eqref{eq:33}	and \eqref{eq:34}, we obtain
$$r=f(z)=f(Tz)\geq0.$$

We claim $z=Tz.$ In fact, suppose $r=0$. Then $$\lim_{n\to\infty}\|T^nx_0-Tz\|= \lim_{n\to\infty}\|T^nx_0 -z\|=0,$$
which means $z=Tz.$ Next  suppose $r>0.$
	It follows from the definition of the upper limit ``$\limsup$" that for all $\varepsilon>0$, there exists a positive integer $N$ such that
	$$\|T^nx_0-Tz\|<r+\varepsilon\mbox{ and }\|T^nx_0-z\|<r+\varepsilon \mbox{ for all positive integer }n>N.$$
It follows from Lemma \ref{lm:23} that for all positive integer $n>N$, $$\Big\|T^nx_0-\frac{z+Tz}2\Big\|=\Big\|\frac{(T^nx_0-z)+(T^nx_0-Tz)}2\Big\|\leq(r+\varepsilon)\left[1-\delta_E\left(\frac{\|z-Tz\|}{r+\varepsilon}\right)\right].$$
Without loss of generality, we may assume that  $\varepsilon<1$. Then the above inequality can be rewritten  as follow:
$$\Big\|T^nx_0-\frac{z+Tz}2\Big\|\leq(r+\varepsilon)\left[1-\delta_E\left(\frac{\|z-Tz\|}{r+1}\right)\right],$$
and so, we have
 $$f\Big(\frac{z+Tz}2\Big)=\limsup_{n\to\infty}\Big\|T^nx_0-\frac{z+Tz}2\Big\|\leq(r+\varepsilon)\left[1-\delta_E\left(\frac{\|z-Tz\|}{r+1}\right)\right].$$
From \eqref{eq:33}, it follows that
	$$
	r\delta_E\left(\frac{\|z-Tz\|}{r+1}\right)\leq(r+\varepsilon)\delta_E\left(\frac{\|z-Tz\|}{r+1}\right)\leq r-f\Big(\frac{z+Tz}2\Big)+\varepsilon
	\leq\varepsilon.$$
Since $\varepsilon$ is arbitrary, we have $$r\delta_E\left(\frac{\|z-Tz\|}{r+1}\right)=0,$$	and hence,  $z=Tz$ by  Lemma \ref{lm:23} (ii). Clearly, $z\geq x_0$, and so $z\in F_{\geq}(T).$

(ii) Let $z\in F_{\geq}(T)$. Then there exists a $x_0\in K$ such that $x_0\leq z$. It follows from the monotonicity of $T$ that $Tx_0\leq Tz=z$, and hence, $T^2x_0=T(Tx_0)\leq Tz=z$. By such analogy, we have
$$T^nx_0\leq Tz=z\mbox{ for all positive integer }n.$$
From Lemma \ref{lm:22}, it follows that $$\|T^nx_0-z\|\leq \|x_0-z\|\mbox{ for all positive integer }n,$$
which implies the boundedness of the sequence $\{T^nx_0\}$. This completes the proof.
\end{proof}
\vskip 2mm

Using the same proof technique of Theorem \ref{t:32} (the only change is $K_n=\{y\in K:y\leq  T^nx_0\}$ and $T^{n+1}x_0\leq T^nx_0$),  the following theorem is easy to be proved.
\vskip 2mm

\begin{thm} \label{t:33} Let $K$ be a nonempty  closed convex subset of a uniformly convex  Banach space $(E,\|\cdot\|)$ with the partial order ``$\leq$" and  let $T : K\to K$ be a
	monotone $\alpha$-nonexpansive mapping. \begin{itemize}\item[(i)] If $x_0\geq Tx_0$ and the orbit $O(x_0)$ is bounded with respect to the norm $\|\cdot\|$,
	then $F_{\leq}(T)\ne\emptyset$.
\item[(ii)] If $F_{\leq}(T)\ne\emptyset$,
	then there exists $x_0\in K$ such that the orbit $O(x_0)$ is bounded with respect to the norm $\|\cdot\|$.
\end{itemize}
\end{thm}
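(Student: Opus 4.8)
The plan is to mirror the proof of Theorem \ref{t:32} essentially verbatim, reversing every order relation exactly as the remark preceding the statement suggests. First I would observe that $x_0\geq Tx_0$ together with the monotonicity of $T$ forces the Picard orbit to be decreasing,
$$x_0\geq Tx_0\geq T^2x_0\geq\cdots\geq T^nx_0\geq T^{n+1}x_0\geq\cdots.$$
Since a uniformly convex space is reflexive, the boundedness of $O(x_0)$ lets me invoke Lemma \ref{lm:31}: the orbit weakly converges to some $x\in E$ with $T^nx_0\geq x$ for every $n$, and weak sequential closedness of the closed convex set $K$ gives $x\in K$. The only structural change from Theorem \ref{t:32} is to set $K_n=\{y\in K:y\leq T^nx_0\}$ and $C=\bigcap_{n\geq1}K_n$; each $K_n$ is closed and convex, $x\in C$, so $C$ is a nonempty closed convex set.

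Next I would minimize the asymptotic-radius functional $f(y)=\limsup_{n\to\infty}\|T^nx_0-y\|$ over $C$, which is proper, convex, continuous and coercive, so Lemma \ref{lm:24} produces $z\in C$ with $f(z)=\inf_{y\in C}f(y)=r$. By construction $z\leq T^nx_0$ for all $n$, hence by monotonicity $Tz\leq T^{n+1}x_0\leq T^nx_0$, which shows $Tz\in C$ and therefore $\tfrac{z+Tz}2\in C$; this yields $r\leq f(\tfrac{z+Tz}2)$ and $r\leq f(Tz)$ exactly as in the analogues of \eqref{eq:33}. The crucial place where the reversed order matters is the application of the monotone $\alpha$-nonexpansive inequality: since here $z\leq T^nx_0$, I apply it with the smaller element $z$ in the first slot and $T^nx_0$ in the second, obtaining
$$\|Tz-T^{n+1}x_0\|^2\leq\alpha\|Tz-T^nx_0\|^2+\alpha\|T^{n+1}x_0-z\|^2+(1-2\alpha)\|z-T^nx_0\|^2.$$
Passing to $\limsup$ and using that a shift of index leaves $\limsup$ unchanged collapses this to $(1-\alpha)f(Tz)^2\leq(1-\alpha)f(z)^2$, and since $\alpha<1$ this gives $f(Tz)\leq r$, the analogue of \eqref{eq:34}. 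Combined with $r\leq f(Tz)$ I conclude $f(Tz)=f(z)=r\geq0$.

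To force $z=Tz$ I would split on $r$. If $r=0$ then $T^nx_0\to z$ and $T^nx_0\to Tz$, so $z=Tz$. If $r>0$ I feed $\|T^nx_0-z\|$ and $\|T^nx_0-Tz\|$ (both eventually below $r+\eps$) into the uniform-convexity estimate \eqref{eq:25}, take $\limsup$, compare with $r\leq f(\tfrac{z+Tz}2)$, and let $\eps\to0$ to obtain $r\delta_E\big(\tfrac{\|z-Tz\|}{r+1}\big)=0$; strict monotonicity of $\delta_E$ at $0$ (Lemma \ref{lm:23}(ii), using $\varepsilon_0=0$) then forces $z=Tz$. Finally $z\leq Tx_0\leq x_0$ shows $z\in F_{\leq}(T)$, proving (i). For (ii), given $z\in F_{\leq}(T)$ I choose $x_0\in K$ with $z\leq x_0$; monotonicity gives $z\leq T^nx_0$ for all $n$, and since $F_{\leq}(T)\neq\emptyset$ Lemma \ref{lm:22}(1) makes $T$ monotone quasi-nonexpansive, whence $\|T^nx_0-z\|\leq\|x_0-z\|$ for all $n$ and $O(x_0)$ is bounded. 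I expect the main obstacle to be purely bookkeeping, namely keeping every comparison pointing the right way so that $Tz\in C$ and so that the $\alpha$-nonexpansive inequality is applied to a comparable pair in the correct order; the analytic core is identical to Theorem \ref{t:32}.
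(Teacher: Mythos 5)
Your proposal is correct and coincides with the paper's own argument: the paper proves Theorem \ref{t:33} simply by invoking the proof technique of Theorem \ref{t:32} with the only changes being $K_n=\{y\in K:\,y\leq T^nx_0\}$ and the decreasing orbit $T^{n+1}x_0\leq T^nx_0$, which is precisely the order-reversed argument you carry out in full. Your handling of the key comparable pair (applying the $\alpha$-nonexpansive inequality with $z\leq T^nx_0$ so that $Tz\in C$) and your use of Lemma \ref{lm:22} for part (ii) match the intended proof step for step.
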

\vskip 2mm

\begin{thm} \label{t:34} Let $(E,\|\cdot\|)$ be a uniformly convex ordered Banach space with the partial order ``$\leq$" with respect to closed convex cone $P$ and let
	$T : P\to P$ be a monotone $\alpha$-nonexpansive mapping.
	Then $F(T)\ne\emptyset$ if and only if  the orbit $O(0)$ is bounded with respect to the norm $\|\cdot\|$.
\end{thm}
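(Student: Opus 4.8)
The plan is to deduce this theorem directly from Theorem \ref{t:32} by exploiting that the origin $0$ is the least element of the cone $P$. The single observation that drives both implications is that, because $T$ maps $P$ into $P$, every iterate $T^n 0$ lies in $P$; in particular $T0\in P$ means $T0-0\in P$, i.e.\ $0\leq T0$. Thus the hypothesis ``$x_0\leq Tx_0$'' of Theorem \ref{t:32}(i) is satisfied automatically with $x_0=0$, and no extra work is needed to verify it. This is precisely why the two-sided equivalence of Theorem \ref{t:32} collapses, for the domain $K=P$, into a single statement about the distinguished orbit $O(0)$.

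For the forward direction I would assume the orbit $O(0)$ is bounded with respect to $\|\cdot\|$. Since $0\leq T0$ and $K=P$ is a nonempty closed convex subset of the uniformly convex space $E$, applying Theorem \ref{t:32}(i) with $x_0=0$ yields $F_{\geq}(T)\ne\emptyset$. As $F_{\geq}(T)\subseteq F(T)$ by the definition of $F_{\geq}(T)$, this gives $F(T)\ne\emptyset$.

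For the converse, suppose $F(T)\ne\emptyset$ and pick any $z\in F(T)$. Since $z\in P$ we have $z-0=z\in P$, hence $0\leq z$; and because $0\in P=K$, this exhibits $z$ as an element of $F_{\geq}(T)$, so $F_{\geq}(T)\ne\emptyset$. By Lemma \ref{lm:22}(1), $T$ is then monotone quasi-nonexpansive. Monotonicity together with $0\leq z$ and $Tz=z$ gives, by induction, $T^n 0\leq z$ for every $n$ (from $T^{n-1}0\leq z$ one obtains $T^n0=T(T^{n-1}0)\leq Tz=z$). Feeding $x=T^{n-1}0\leq z$ into the quasi-nonexpansive inequality and iterating produces the monotone chain
$$\|T^n 0-z\|\leq\|T^{n-1}0-z\|\leq\cdots\leq\|0-z\|=\|z\|,$$
so that $\|T^n 0\|\leq 2\|z\|$ for all $n$ and the orbit $O(0)$ is bounded. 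This is exactly the argument of Theorem \ref{t:32}(ii) specialized to the canonical choice $x_0=0$.

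I do not anticipate a genuine obstacle: the entire content of the theorem is the recognition that the minimal element $0$ of $P$ simultaneously satisfies $0\leq T0$ (making part (i) applicable) and $0\leq z$ for every fixed point $z$ (making the quasi-nonexpansive estimate available with $x_0=0$). The only point requiring any care is verifying that $0\leq z$ holds for an arbitrary fixed point, which is immediate from $z\in P$ and the definition of ``$\leq$''; everything else is an appeal to the already-proved Theorem \ref{t:32} and Lemma \ref{lm:22}.
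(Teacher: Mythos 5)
Your proof is correct and takes essentially the same approach as the paper's: both directions hinge on the observations that $T0\in P$ gives $0\leq T0$ (so Theorem \ref{t:32}(i) applies with $x_0=0$) and that any fixed point $z\in P$ satisfies $0\leq z$ (so monotonicity yields $T^n0\leq z$ and the quasi-nonexpansiveness from Lemma \ref{lm:22} gives $\|T^n0-z\|\leq\|z\|$, bounding the orbit). The only cosmetic difference is that the paper records the identity $F(T)=F_{\geq}(T)$ up front, which your inclusion $F_{\geq}(T)\subseteq F(T)$ together with the verification $z\in F_{\geq}(T)$ reproduces.
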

\vskip 2mm

\begin{proof} Since $T(P)\subset P$, for all $x\in F(T)$, we have $x\geq0$, and so $x\in F_{\geq}(T)$. Then $F(T)\subset F_{\geq}(T)\subset F(T)$,
and so,	$F(T)=F_{\geq}(T)$.
	
	 ``Sufficiency". It follows from the definition of the partial order ``$\leq$" that $x_0=0\leq T0=Tx_0$. Then the conclusion directly follows from Theorem \ref{t:32}.
	
	 ``Necessity". Take $x\in F(T)=F_{\geq}(T)$. Then $x\geq0$, and hence, $$x=Tx\geq T0, x=Tx\geq T^20, \cdots, x=Tx\geq T^n0, \cdots.$$
	 That is, $x\geq T^n0$ for all positive integer $n$.
	  It follows from Lemma \ref{lm:22} that $$\|T^{n+1}0-x\|\leq \|T^n0-x\|\leq\cdots\leq\|0-x\|=\|x\|.$$
	This means that the orbit $O(0)$ is bounded with respect to the norm $\|\cdot\|$.
\end{proof}
\vskip 4mm

\section{\bf Weak and strong  convergence of Picard iteration}
\vskip 3mm

\begin{Definition}\label{def:41} Let  $(E,\|\cdot\|)$ be a Banach space with the partial order ``$\leq$".  The norm $\|\cdot\|$ is called  {\em monotonic} if
	$$
	\|x\|\leq \|y\|\mbox{  for all }x,y\in E\mbox{  with }0\leq x\leq y.$$
\end{Definition}
 It is well known that if the cone $P$ is normal, by Lemma \ref{lm:21},  there exists a equivalent norm $\|\cdot\|_1$ of $E$ such that
 $$
 \|x\|_1\leq \|y\|_1\mbox{  for all }x,y\in E\mbox{  with }0\leq x\leq y.$$
\vskip 3mm

\begin{thm} \label{t:41} Let $K$ be a nonempty  closed convex subset of a uniformly convex  Banach space $(E,\|\cdot\|)$ with the partial order ``$\leq$" with respect to the normal cone $P$ and let	$T : K\to K$ be a monotone $\alpha$-nonexpansive mapping. Assume that  $x_0\leq Tx_0$ and the orbit $O(x_0)$ is bounded with respect to the norm $\|\cdot\|$. If the norm $\|\cdot\|$ is  monotonic, then $T^nx_0$ weakly converges to some point $z\in F_{\geq}(T)$.
\end{thm}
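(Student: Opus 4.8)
The plan is to show that the weak limit of the orbit---which exists automatically---is itself the desired fixed point, using the monotonic norm as a surrogate for Opial's property.

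First I would record the order structure. Since $x_0\leq Tx_0$ and $T$ is monotone, the orbit $\{T^nx_0\}$ is increasing, and being norm-bounded in a uniformly convex (hence reflexive) space, Lemma~\ref{lm:31}(ii) yields a point $z\in E$ with $T^nx_0$ weakly convergent to $z$, while Lemma~\ref{lm:31}(i) gives $T^nx_0\leq z$ for every $n$ (in particular $x_0\leq z$). Closedness and convexity of $K$ make it weakly sequentially closed, so $z\in K$ and $Tz$ is defined. Applying $T$ to $T^{n-1}x_0\leq z$ gives $T^nx_0\leq Tz$ for all $n$; passing to the weak limit and invoking Lemma~\ref{lm:21}(i) against the constant sequence $Tz$ produces $z\leq Tz$.

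The heart of the argument is to promote ``$z\leq Tz$'' to ``$z=Tz$'', and here I would exploit the monotonic norm. From the chains $T^nx_0\leq T^{n+1}x_0\leq z\leq Tz$ one gets $0\leq z-T^{n+1}x_0\leq z-T^nx_0$ and $0\leq Tz-T^{n+1}x_0\leq Tz-T^nx_0$, so monotonicity of the norm forces both $\{\|z-T^nx_0\|\}$ and $\{\|Tz-T^nx_0\|\}$ to be nonincreasing; call their limits $a$ and $b$. Monotonicity applied once more to $0\leq z-T^nx_0\leq Tz-T^nx_0$ gives $a\leq b$. In the opposite direction, the monotone $\alpha$-nonexpansive inequality with $x=T^nx_0\leq y=z$ reads
$$\|T^{n+1}x_0-Tz\|^2\leq\alpha\|T^{n+1}x_0-z\|^2+\alpha\|Tz-T^nx_0\|^2+(1-2\alpha)\|T^nx_0-z\|^2;$$
since all four distance sequences converge (the shifted ones to $a$ and $b$), passing to the limit and cancelling yields $(1-\alpha)b^2\leq(1-\alpha)a^2$, hence $b\leq a$. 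Therefore $a=b$.

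Finally I would rerun the midpoint uniform-convexity estimate exactly as in Theorem~\ref{t:32}. Suppose $a>0$ and set $d=\|z-Tz\|$. From $z\leq\frac{z+Tz}{2}\leq Tz$ and $T^nx_0\leq z$ the monotonic norm gives the lower bound $a\leq\liminf_n\|T^nx_0-\frac{z+Tz}{2}\|$, while applying \eqref{eq:25} to the pair $T^nx_0-z$ and $T^nx_0-Tz$ (whose difference is the constant $Tz-z$ and whose midpoint is $T^nx_0-\frac{z+Tz}{2}$), with radius $a+\varepsilon$ and then replacing $a+\varepsilon$ by $a+1$ inside $\delta_E$ via monotonicity of $\delta_E$ to stay strictly below $2$ (as $d\leq 2a$), gives after $\varepsilon\to0$ the upper bound $\limsup_n\|T^nx_0-\frac{z+Tz}{2}\|\leq a\big[1-\delta_E(\frac{d}{a+1})\big]$. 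Combining the two bounds forces $\delta_E(\frac{d}{a+1})=0$, whence $d=0$ by Lemma~\ref{lm:23}(ii); the degenerate case $a=0$ is immediate from $\|z-Tz\|\leq\|z-T^nx_0\|+\|T^nx_0-Tz\|\to a+b=0$. Either way $Tz=z$, and since $x_0\leq z$ we obtain $z\in F_{\geq}(T)$ with $T^nx_0$ weakly convergent to $z$. The main obstacle I anticipate is precisely the implication $z\leq Tz\Rightarrow z=Tz$: in a general uniformly convex space the weak cluster point of an approximate-fixed-point sequence need not be fixed, since such spaces may fail Opial's condition, so weak convergence alone is insufficient. The monotonic norm is exactly what rescues the argument, converting the order relations into a Fej\'er-type monotonicity of the distances $\|z-T^nx_0\|$ and $\|Tz-T^nx_0\|$, which guarantees that the limits $a,b$ exist and coincide---the input the midpoint estimate requires.
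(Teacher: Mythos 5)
Your proof is correct, and it takes a genuinely different route from the paper's. The paper recycles the full asymptotic-center machinery of Theorem~\ref{t:32}: it first extracts the weak limit $x$ of the increasing orbit, then \emph{separately} produces a fixed point $z$ as a minimizer (via Lemma~\ref{lm:24}) of $f(y)=\limsup_{n\to\infty}\|T^nx_0-y\|$ over $C=\bigcap_{n=1}^\infty\{y\in K: T^nx_0\leq y\}$, and only then uses the monotonic norm---through $0\leq x-T^nx_0\leq z-T^nx_0$, hence $f(x)\leq f(z)\leq f(x)$---to force $f(x)=f(z)$ and identify $x=z$ by rerunning the uniform-convexity midpoint estimate on the pair $x,z$. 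You dispense with the minimization step entirely and prove directly that the weak limit $z$ is fixed: you obtain $z\leq Tz$ by passing $T^nx_0\leq Tz$ to the weak limit via Lemma~\ref{lm:21}(i); you use the monotonic norm to make both distance sequences $\|z-T^nx_0\|$ and $\|Tz-T^nx_0\|$ nonincreasing, so honest limits $a\leq b$ exist; you extract $b\leq a$ from the $\alpha$-nonexpansive inequality---and here the existence of genuine limits lets you pass to the limit term by term even when $\alpha<0$ or $1-2\alpha<0$, a point that the term-by-term $\limsup$ manipulation in the paper's Theorem~\ref{t:32} handles rather loosely---and you run the midpoint estimate \eqref{eq:25} at $\frac{z+Tz}{2}$, with the order relation $z\leq\frac{z+Tz}{2}$ plus the monotonic norm supplying the lower bound $a\leq\liminf_{n\to\infty}\|T^nx_0-\frac{z+Tz}{2}\|$ that, in the paper, comes from minimality of $f(z)$ over $C$; your checks that $d\leq 2a$ (so $\frac{d}{a+1}$ lies in the domain of $\delta_E$) and your separate treatment of $a=0$ are both in order. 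What your version buys is economy and self-containment: no Lemma~\ref{lm:24}, no convexity/coercivity verification for $f$, no auxiliary set $C$, and the weak limit itself is the fixed point. What the paper's version buys is reusability: its minimizer construction is verbatim the existence proof of Theorem~\ref{t:32}, so the convergence theorem is a short addendum to a proof already on record. Both arguments rest on the same two pillars---the monotone norm converting order relations into Fej\'er-type monotonicity of distances, and the modulus-of-convexity estimate---so the difference is architectural rather than conceptual, but yours is the leaner route.
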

\vskip 2mm

\begin{proof}
Using the same proof technique of Theorem \ref{t:32}, we obtain the following:
	\begin{itemize}
		\item[(i)]  $T^nx_0$ weakly converges to some point  $x\in  K$ and $T^nx_0\leq x$ for all positive integer $n$. Furthermore,
 $$x\in C=\bigcap\limits_{n=1}^\infty\{y\in K; T^nx_0\leq y\}.$$
	 \item[(ii)] Let $f(y)=\limsup\limits_{n\to\infty}\|T^nx_0-y\|$  for all $y\in C.$ Then there exists $z\in C$ such that $z=Tz\geq T^nx_0$ for all positive integer $n$ and $$f(z)=\inf\{f(y);y\in C\}\leq f(x).$$
	\end{itemize}

Next we prove $z=x$. It follows from Lemma \ref{lm:21} (i) that $$ T^nx_0\leq x\leq z=Tz.$$
Thus, we have 	$$0\leq x-T^nx_0\leq  z-T^nx_0, $$ which implies that $$\| x-T^nx_0\|\leq \| z-T^nx_0\| $$ by the monotonicity of the norm $\|\cdot\|$.
Consequently, $$f(x)=\limsup_{n\to\infty}\| x-T^nx_0\|\leq \limsup_{n\to\infty}\| z-T^nx_0\|=f(z)\leq f(x), $$
and hence, $f(x)=f(z)=r$.  Using the same proof technique of Theorem \ref{t:32}, we have $x=z$ also. This completes the proof.
\end{proof}

Similarly, we also the following theorem.

\begin{thm} \label{t:42} Let $K$ be a nonempty  closed convex subset of a uniformly convex  Banach space $(E,\|\cdot\|)$ with the partial order ``$\leq$" and let	$T : K\to K$ be a monotone $\alpha$-nonexpansive mapping. Assume that  $x_0\geq Tx_0$ and the orbit $O(x_0)$ is bounded with respect to the norm $\|\cdot\|$.  If the norm $\|\cdot\|$ is  monotonic, then $T^nx_0$ weakly converges to some point $z\in F_{\geq}(T)$.
\end{thm}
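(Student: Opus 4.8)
The plan is to run the proof of Theorem \ref{t:41} verbatim with every order relation reversed, exactly as Theorem \ref{t:33} reverses Theorem \ref{t:32}. Since $x_0\geq Tx_0$, monotonicity of $T$ forces the Picard orbit to be decreasing, $x_0\geq Tx_0\geq T^2x_0\geq\cdots$. This sequence is bounded by hypothesis, and $E$, being uniformly convex, is reflexive, so Lemma \ref{lm:31}(ii) provides a weak limit $x\in K$ (weak closedness of $K$ follows from its closed convexity), while Lemma \ref{lm:31}(i) in its decreasing form yields $T^nx_0\geq x$ for all $n$.

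Next I would set $C=\bigcap_{n=1}^\infty\{y\in K:\ y\leq T^nx_0\}$, which is nonempty (it contains $x$), closed and convex, and define the proper convex, continuous, coercive functional $f(y)=\limsup_{n\to\infty}\|T^nx_0-y\|$ on $C$. Lemma \ref{lm:24} supplies a minimizer $z\in C$ with $f(z)=r=\inf_{y\in C}f(y)$. Because $z\leq T^nx_0$ for every $n$, monotonicity of $T$ gives $Tz\leq T^{n+1}x_0\leq T^nx_0$, so $Tz\in C$ and hence $\tfrac{z+Tz}2\in C$; thus $r\leq f(\tfrac{z+Tz}2)$ and $r\leq f(Tz)$. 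Applying the monotone $\alpha$-nonexpansive inequality to the pair $z\leq T^nx_0$ and passing to $\limsup$ gives $(1-\alpha)f(Tz)^2\leq(1-\alpha)f(z)^2$, i.e. $f(Tz)\leq r$. The uniform-convexity estimate \eqref{eq:25}, applied to $\tfrac{(T^nx_0-z)+(T^nx_0-Tz)}2$ as in Theorem \ref{t:32}, then forces $r\,\delta_E(\|z-Tz\|/(r+1))=0$, whence $z=Tz$ by Lemma \ref{lm:23}(ii) (the case $r=0$ being immediate). Since $z\leq x_0$, this gives $z\in F_{\leq}(T)$.

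The genuinely new step—the only ingredient beyond Theorems \ref{t:32} and \ref{t:33}—is to identify the weak limit $x$ with the minimizer $z$, and here monotonicity of the norm is essential. From $z\leq T^nx_0$ for all $n$ together with $T^nx_0\rightharpoonup x$, Lemma \ref{lm:21}(i) gives $z\leq x$; combined with $x\leq T^nx_0$ this produces the chain $z\leq x\leq T^nx_0$, so that $0\leq T^nx_0-x\leq T^nx_0-z$. The monotonic norm then yields $\|T^nx_0-x\|\leq\|T^nx_0-z\|$, hence $f(x)\leq f(z)=r$; since $x\in C$ we also have $r\leq f(x)$, so $f(x)=f(z)=r$. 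Feeding $\tfrac{x+z}2\in C$ through the same convexity argument forces $r\,\delta_E(\|x-z\|/(r+1))=0$ and therefore $x=z$, so $T^nx_0$ converges weakly to the fixed point $z$.

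I expect the only real obstacle to be bookkeeping: one must keep every reversed inequality consistent so that the sandwich $z\leq x\leq T^nx_0$ comes out with the orientation needed for the monotonic norm, which is where the decreasing case differs delicately from Theorem \ref{t:41} (whose chain reads $T^nx_0\leq x\leq z$). I would also flag that the stated conclusion $z\in F_{\geq}(T)$ should read $z\in F_{\leq}(T)$, in accordance with Theorem \ref{t:33}; and that, unlike Theorem \ref{t:41}, normality of $P$ need not be assumed here, since monotonicity of the norm is posited directly—though by Lemma \ref{lm:21}(ii) one could equally begin from a normal cone and pass to the equivalent monotone norm.
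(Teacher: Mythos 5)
Your proposal is correct and is essentially the paper's own argument: the paper proves Theorem \ref{t:42} only by the remark that it follows ``similarly'' to Theorem \ref{t:41}, and your order-reversed run of that proof (decreasing orbit, weak limit $x$ via Lemma \ref{lm:31}, minimizer $z$ of $f$ on $C=\bigcap_{n=1}^{\infty}\{y\in K:\ y\leq T^nx_0\}$, $z=Tz$ by the uniform-convexity estimate, then the monotone-norm sandwich $z\leq x\leq T^nx_0$ forcing $f(x)=f(z)=r$ and hence $x=z$) is exactly the intended proof. You are also right on both side remarks: the stated conclusion should read $z\in F_{\leq}(T)$ (a typo in the paper, confirmed by how Theorem \ref{t:44} invokes this result), and normality of $P$ is not needed here since monotonicity of the norm is assumed directly.
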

\vskip 2mm

Let the intersection of the domain $D(T)$ of monotone $\alpha$-nonexpansive mapping $T$ and the cone $P$ is nonempty,  i.e., $D(T)\cap P\ne\emptyset$. The following strongly convergent theorems may be obtained.

\begin{thm} \label{t:43} Let $K$ be a nonempty  closed convex subset of a uniformly convex  Banach space $(E,\|\cdot\|)$ with the partial order ``$\leq$"  and let	$T : K\to K$ be a monotone  $\alpha$-nonexpansive mapping. Assume that  $0\leq x_0\leq Tx_0$ and the orbit $O(x_0)$ is bounded with respect to the norm $\|\cdot\|$.  If the norm $\|\cdot\|$ is  monotonic, then $T^nx_0$ strongly converges to some point $z\in F_{\geq}(T)$.
\end{thm}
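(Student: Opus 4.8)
The plan is to first recover weak convergence exactly as in Theorem~\ref{t:41}, and then to upgrade it to strong convergence by invoking the Kadec--Klee property of uniformly convex spaces recorded in Lemma~\ref{lm:23}~(iv). Running the argument of Theorem~\ref{t:32} and Theorem~\ref{t:41} (which uses only uniform convexity, hence reflexivity via Milman--Pettis, together with monotonicity of the norm), from $x_0\leq Tx_0$ and the monotonicity of $T$ the orbit is increasing,
$$x_0\leq Tx_0\leq T^2x_0\leq\cdots,$$
and by Lemma~\ref{lm:31}~(ii) it weakly converges to some $z\in K$ with $T^nx_0\leq z$ for every $n$; moreover this limit satisfies $z=Tz\in F_{\geq}(T)$. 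So I take this $z$ as the candidate strong limit, and it remains only to remove the gap between weak and strong convergence.

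The new ingredient over Theorem~\ref{t:41} is the hypothesis $0\leq x_0$, which forces $0\leq T^nx_0$ for all $n$ because the sequence is increasing. Combined with $T^nx_0\leq T^{n+1}x_0\leq z$ this gives the chain $0\leq T^nx_0\leq T^{n+1}x_0\leq z$, and the monotonicity of the norm yields
$$\|T^nx_0\|\leq\|T^{n+1}x_0\|\leq\|z\|\mbox{ for all }n.$$
Thus $\{\|T^nx_0\|\}$ is increasing and bounded above by $\|z\|$, so $\lim_{n\to\infty}\|T^nx_0\|$ exists and is at most $\|z\|$. On the other hand the norm is weakly lower semicontinuous, so the weak convergence of $T^nx_0$ to $z$ forces $\|z\|\leq\liminf_{n\to\infty}\|T^nx_0\|$. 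The two estimates squeeze together to give
$$\lim_{n\to\infty}\|T^nx_0\|=\|z\|.$$

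The last step is then automatic: since $T^nx_0$ weakly converges to $z$ and $\|T^nx_0\|\to\|z\|$, Lemma~\ref{lm:23}~(iv) yields the strong convergence $T^nx_0\to z\in F_{\geq}(T)$, as required. I expect the only delicate point to be the equality $\lim_{n\to\infty}\|T^nx_0\|=\|z\|$, and specifically its upper half $\|T^nx_0\|\leq\|z\|$: this is exactly where the extra hypothesis $0\leq x_0$ is indispensable, for without it one only knows $x_0\leq T^nx_0\leq z$ rather than $0\leq T^nx_0\leq z$, and monotonicity of the norm cannot be applied. The matching lower bound comes for free from weak lower semicontinuity, and everything else is the by-now-standard monotone-orbit machinery already developed in Theorem~\ref{t:32} and Theorem~\ref{t:41}.
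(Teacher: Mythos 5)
Your proof is correct and takes essentially the same approach as the paper's: weak convergence to a fixed point $z\in F_{\geq}(T)$ via Theorem~\ref{t:41}, the chain $0\leq T^nx_0\leq T^{n+1}x_0\leq z$ combined with monotonicity of the norm to get $\|T^nx_0\|$ increasing and bounded by $\|z\|$, weak lower semicontinuity of the norm for the reverse estimate, and the Kadec--Klee property from Lemma~\ref{lm:23}~(iv) to upgrade to strong convergence. Your observation that the hypothesis $0\leq x_0$ is precisely what licenses the norm-monotonicity step matches the paper's use of that assumption exactly.
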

\vskip 2mm
\begin{proof} It follows from Theorem \ref{t:41} that there exists $z\in F_{\geq}(T)$ such that $T^nx_0$ weakly converges to some point  $z\in  K$ and
	\begin{equation}\label{eq:43}0\leq x_0\leq T^nx_0\leq T^{n+1}x_0\leq z\mbox{ for all positive integer }n.\end{equation}
	Then  by the monotonicity of the norm, we have
\begin{equation}\label{eq:44}0\leq \|x_0\|\leq \|T^nx_0\|\leq \|T^{n+1}x_0\|\leq \|z\|\mbox{ for all positive integer }n.\end{equation}
That is, the sequence $\{\|T^nx_0\|_1\}$ is monotone increasing and bounbed. So there is a real number $\gamma$ such that \begin{equation}\label{eq:45}\lim_{n\to \infty}\|T^nx_0\|=\gamma\leq\|z\|.\end{equation}
From the weakly lower semi-continuity  of the norm, it follows that
	$$\|z\|_1\leq\liminf_{n\to \infty}\|T^nx_0\| =\lim_{n\to \infty}\|T^nx_0\|=\gamma\leq\|z\|,$$
	and hence, $$\lim_{n\to \infty}\|T^nx_0\|=\|z\|.$$
By Lemma \ref{lm:23} (iv), we have $\lim\limits_{n\to \infty}T^nx_0=z.$ This completes the proof.
\end{proof}
\vskip 2mm

\begin{thm} \label{t:44} Let $K$ be a nonempty  closed convex subset of a uniformly convex  Banach space $(E,\|\cdot\|)$ with the partial order ``$\leq$"  and let	$T : K\to K$ be a monotone  $\alpha$-nonexpansive mapping. Assume that  $0\geq x_0\geq Tx_0$ and the orbit $O(x_0)$ is bounded with respect to the norm $\|\cdot\|$.  If the norm $\|\cdot\|$ is  monotonic, then $T^nx_0$ strongly converges to some point $z\in F_{\leq}(T)$.
\end{thm}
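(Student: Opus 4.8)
The plan is to mirror the argument of Theorem \ref{t:43}, the only genuine change being that the iterates now lie in the negative cone instead of the positive cone, so the monotonicity of the norm must be applied only after reflecting the order chain through the origin. First I would invoke Theorem \ref{t:42}: since $x_0\geq Tx_0$, the monotonicity of $T$ forces $T^{n+1}x_0\leq T^nx_0$ for every $n$, so $\{T^nx_0\}$ is decreasing, and as $O(x_0)$ is bounded, Theorem \ref{t:42} produces a fixed point $z=Tz$ to which $T^nx_0$ weakly converges. Applying Lemma \ref{lm:31}(i) to this decreasing sequence gives $z\leq T^nx_0$ for all $n$, and combining this with $T^nx_0\leq x_0\leq 0$ yields the order chain
$$
z\leq T^{n+1}x_0\leq T^nx_0\leq x_0\leq 0\qquad\mbox{for all positive integer }n .
$$
Since $z\leq x_0$ and $x_0\in K$, this also records that $z\in F_{\leq}(T)$.

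The key step is then to pass the chain into the positive cone, where the monotonic norm is available. Negating reverses all inequalities, giving
$$
0\leq -x_0\leq -T^nx_0\leq -T^{n+1}x_0\leq -z ,
$$
and using $\|{-w}\|=\|w\|$ together with the monotonicity of the norm (Definition \ref{def:41}) I would deduce
$$
\|x_0\|\leq \|T^nx_0\|\leq \|T^{n+1}x_0\|\leq \|z\| .
$$
Hence $\{\|T^nx_0\|\}$ is increasing and bounded above by $\|z\|$, so it converges to some $\gamma\leq\|z\|$. By the weak lower semicontinuity of the norm, $\|z\|\leq\liminf_{n\to\infty}\|T^nx_0\|=\gamma\leq\|z\|$, whence $\lim_{n\to\infty}\|T^nx_0\|=\|z\|$. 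Finally, combining $T^nx_0\rightharpoonup z$ with this norm convergence, the Kadec--Klee property of Lemma \ref{lm:23}(iv) delivers the strong convergence $T^nx_0\to z$, and $z\in F_{\leq}(T)$ as already noted.

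The only point that needs care --- and the sole, albeit minor, obstacle --- is that Definition \ref{def:41} asserts $\|x\|\leq\|y\|$ only for pairs $0\leq x\leq y$ in the positive cone, whereas here every iterate satisfies $T^nx_0\leq 0$. Reflecting the whole order chain through the origin, and then exploiting the symmetry $\|{-w}\|=\|w\|$ of the norm, is exactly what converts the ``negative'' chain into a usable ``positive'' one; once this is in place, the monotonicity and boundedness of $\{\|T^nx_0\|\}$, the weak lower semicontinuity inequality, and the Kadec--Klee step are a verbatim transcription of the proof of Theorem \ref{t:43}.
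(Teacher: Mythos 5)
Your proof is correct and follows essentially the same route as the paper's: invoke Theorem \ref{t:42} for the weak limit $z$ and the decreasing order chain, reflect the chain through the origin so that the monotone norm (Definition \ref{def:41}) becomes applicable, and then finish verbatim as in Theorem \ref{t:43} via the monotone bounded sequence $\{\|T^nx_0\|\}$, weak lower semicontinuity of the norm, and the Kadec--Klee property. You even handle correctly the one delicate point, reading the conclusion of Theorem \ref{t:42} as $z\in F_{\leq}(T)$ (the statement's $F_{\geq}(T)$ is an evident typo), exactly as the paper's own proof does.
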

\vskip 2mm
\begin{proof} It follows from Theorem \ref{t:42} that there exists $z\in F_{\leq}(T)$ such that $T^nx_0$ weakly converges to  $z$ and
	$$0\geq x_0\geq T^nx_0\geq T^{n+1}x_0\geq z\mbox{ for all positive integer }n.$$
	Then  $$0\leq -x_0\leq -T^nx_0\leq -T^{n+1}x_0\leq -z\mbox{ for all positive integer }n,$$
	and so, $$0\leq \|x_0\|\leq \|T^nx_0\|\leq \|T^{n+1}x_0\|\leq \|z\|\mbox{ for all positive integer }n.$$
The remainder of the proof are the same as ones of Theorem \ref{t:43}, we omit it.
\end{proof}
\vskip 2mm

Combining Theroem \ref{t:34} and \ref{t:41}, the following conclusion is easy to be showed.

\vskip 2mm
\begin{cor} \label{co:45} Let $(E,\|\cdot\|)$ be a uniformly convex  Banach space  with the partial order ``$\leq$" with respect to the closed convex cone $P$ and let	$T : P\to P$ be a monotone  $\alpha$-nonexpansive mapping with $F(T)\ne\emptyset$.   If the norm $\|\cdot\|$ is  monotonic, then $T^n0$ strongly converges to some point $z\in F(T)$.
\end{cor}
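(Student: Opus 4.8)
The plan is to reduce the statement to the strong--convergence result already established for a monotone $\alpha$-nonexpansive self-map of $P$ starting from an initial point dominated by the cone. First I would take the natural initial point $x_0=0$, the vertex of the cone. Since $T$ maps $P$ into $P$, the image $T0$ lies in $P$, which by the very definition of the partial order means $0\leq T0$; trivially $0\leq 0$ as well. Hence the chain $0\leq x_0\leq Tx_0$ holds, so $x_0=0$ satisfies exactly the hypothesis on the initial point required by Theorem \ref{t:43}.

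Next I would supply the missing boundedness hypothesis. The only data beyond the cone structure and the monotonic norm is that $F(T)\ne\emptyset$, and this is precisely the content of one direction of Theorem \ref{t:34}: because $T:P\to P$ has a fixed point, the orbit $O(0)=\{T^n0\}$ is bounded with respect to $\|\cdot\|$. With boundedness of $O(0)$ in hand, together with $0\leq x_0\leq Tx_0$, the uniform convexity of $E$, and the monotonicity of the norm, \emph{all} hypotheses of Theorem \ref{t:43} are met upon taking $K=P$ and $x_0=0$. That theorem then furnishes a point $z\in F_{\geq}(T)$ to which $T^n0$ converges \emph{strongly}.

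Finally I would identify the limit as an ordinary fixed point. As observed at the start of the proof of Theorem \ref{t:34}, the inclusion $T(P)\subset P$ forces every fixed point of $T$ to satisfy $x\geq 0$, whence $F(T)=F_{\geq}(T)$; consequently $z\in F(T)$ and the desired strong convergence $T^n0\to z\in F(T)$ follows. There is essentially no analytic obstacle here, since the corollary is a direct assembly of two earlier results. The one point that warrants care is the choice of the second ingredient: obtaining strong rather than merely weak convergence forces one to invoke the version carrying the extra hypothesis $0\leq x_0$, namely Theorem \ref{t:43}, in place of the weak--convergence Theorem \ref{t:41}. This costs nothing, because the condition $0\leq x_0$ is automatic once $x_0=0$ is taken to be the bottom of the cone, which is exactly what makes the Kadec--Klee upgrade (via the monotone norm and weak lower semicontinuity, as carried out inside the proof of Theorem \ref{t:43}) available.
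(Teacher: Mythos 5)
Your proposal is correct and takes essentially the same route as the paper, whose entire proof is the one-line remark that the corollary follows by combining Theorem \ref{t:34} (which supplies the boundedness of $O(0)$ from $F(T)\ne\emptyset$) with the convergence theorem; note that the paper actually cites Theorem \ref{t:41}, which yields only \emph{weak} convergence, so your substitution of Theorem \ref{t:43} --- legitimized by the automatic facts $0\in P$ and $0\leq T0$ from $T(P)\subset P$, and by monotonicity of the norm implying normality of $P$ --- is precisely the correct reading of the intended argument. The closing identification $z\in F_{\geq}(T)\subset F(T)$ is immediate, so nothing further is needed.
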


\vskip 2mm
\begin{cor} \label{co:46} Let $(E,\|\cdot\|)$ be a uniformly convex  Banach space  with the partial order ``$\leq$" with respect to the closed convex $P$ and let	$T : P\to P$ be a monotone nonexpansive mapping with $F(T)\ne\emptyset$.  If the norm $\|\cdot\|$ is  monotonic, then for all $x\in P$ with  $x\leq Tx$, $T^nx$ strongly converges to some point $z\in F(T)$.
\end{cor}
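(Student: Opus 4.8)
The plan is to reduce the claim to the strong convergence Theorem \ref{t:43} by verifying its hypotheses for the starting point $x_0=x$. First I would record that a monotone nonexpansive mapping is monotone $0$-nonexpansive, so every result of the paper applies here with $\alpha=0$; in particular, for comparable points $u\leq v$ one has both $Tu\leq Tv$ and the genuine contraction estimate $\|Tu-Tv\|\leq\|u-v\|$, which is stronger than what a general $\alpha$-nonexpansive mapping provides and will be the crucial ingredient. Since $P$ is a closed convex cone, it is a legitimate nonempty closed convex subset $K=P$ on which $T$ acts into itself.

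Next, because $F(T)\ne\emptyset$, Theorem \ref{t:34} gives that the orbit $O(0)=\{T^n0\}$ is bounded with respect to $\|\cdot\|$ and, moreover, that $F(T)=F_{\geq}(T)$. Now fix $x\in P$ with $x\leq Tx$. Since $x\in P$ we also have $0\leq x$, hence $0\leq x\leq Tx$, so the order hypothesis required by Theorem \ref{t:43} is already in place.

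The key step is to show that $O(x)=\{T^nx\}$ is norm-bounded, using $O(0)$ as a comparison orbit. Starting from $0\leq x$, monotonicity propagates to $T^n0\leq T^nx$ for every $n$, while applying the nonexpansive estimate to the comparable pair at each iteration yields, by induction, $\|T^nx-T^n0\|\leq\|T^{n-1}x-T^{n-1}0\|\leq\cdots\leq\|x-0\|=\|x\|$. Combining this with the boundedness of $O(0)$ from Theorem \ref{t:34} gives $\|T^nx\|\leq\|T^n0\|+\|x\|$, which is bounded uniformly in $n$; hence $O(x)$ is bounded. This is precisely the place where the full nonexpansiveness, rather than mere $\alpha$-nonexpansiveness, is used, which explains why the corollary is stated for monotone nonexpansive maps and not in the broader $\alpha$-nonexpansive setting.

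Finally, having checked that $0\leq x\leq Tx$, that $O(x)$ is bounded, and that the norm is monotonic, I would invoke Theorem \ref{t:43} with $x_0=x$ and $K=P$ to conclude that $T^nx$ strongly converges to some $z\in F_{\geq}(T)=F(T)$, which is exactly the assertion. The only genuine obstacle is the boundedness of $O(x)$; once the comparison with the bounded orbit $O(0)$ is set up via the comparable-pair contraction estimate, the remainder is a direct application of Theorem \ref{t:43}.
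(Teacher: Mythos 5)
Your proposal is correct and follows essentially the same route as the paper's own proof: both use Theorem \ref{t:34} to get boundedness of $O(0)$, propagate $T^n0\leq T^nx$ by monotonicity, apply the comparable-pair nonexpansive estimate to get $\|T^nx-T^n0\|\leq\|x\|$ and hence boundedness of $O(x)$, and then invoke Theorem \ref{t:43}. Your added remarks --- checking $0\leq x\leq Tx$ explicitly, noting $F(T)=F_{\geq}(T)$, and observing that full nonexpansiveness (not mere $\alpha$-nonexpansiveness) is what makes the orbit comparison work --- are accurate elaborations of the same argument.
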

\vskip 2mm
\begin{proof} It follows from Theroem \ref{t:34} that the orbit $O(0)$ is bounded with respect to the norm $\|\cdot\|$. Then by the monotonicity of $T$, for all $x\in P$ (i.e.,$0\leq x$), we have
	$T0\leq Tx$, $T^20\leq T^2x,$ and so by the same manner, we obtain
	$$T^n0\leq T^nx\mbox{ for all positive integer }n.$$
Since  	$T$ be a monotone nonexpansive mapping, we have
$$\|T^n0-T^nx\|\leq\|T^{n-1}x-T^{n-1}0\|\leq\cdots\leq\|x-0\|=\|x\|\mbox{ for all positive integer }n.$$
So the orbit $O(x)$ is bounded with respect to the norm $\|\cdot\|$. Then the conclusion directly follows from Theorem \ref{t:43}.
\end{proof}
\vskip 2mm


\end{document}